\newtheorem{theorem}{Theorem}
\newtheorem{corollary}[theorem]{Corollary}
\newtheorem{definition}[theorem]{Definition}
\newtheorem{lemma}[theorem]{Lemma}
\newtheorem{remark}[theorem]{Remark}
\newenvironment{proof}[1][Proof]{\noindent\textbf{#1.} }{\ \rule{0.5em}{0.5em}}
\newcommand*{\mcI}{\mathcal{I}}
\newcommand*{\mcK}{\mathcal{K}}
\newcommand*{\mcT}{\mathcal{T}}
\newcommand*{\mcA}{\mathcal{A}}
\newcommand*{\mcR}{\mathcal{R}}
\newcommand*{\mcS}{\mathcal{S}}
\title{Acceleration of an iterative method for the evaluation of high-frequency multiple scattering effects}
\author{
Yassine Boubendir\thanks{New Jersey Institute of Technology, Department of Mathematical Sciences, University Heights, Newark NJ 07102, USA. boubendi@njit.edu},
Fatih Ecevit\thanks{Bo\u{g}azi\c{c}i University, Department of Mathematics, Bebek TR 34342, Istanbul, Turkey. fatih.ecevit@boun.edu.tr},
and
Fernando Reitich\thanks{CAP S.A., Gertrudis Eche\~{n}ique 220, Santiago, Chile. fernando.reitich@cap.cl}
}
\begin{document}

\date{}
\maketitle

\begin{abstract}
High frequency integral equation methodologies display the capability of reproducing
single-scattering returns in frequency-independent  computational times and employ
a Neumann series formulation to handle multiple-scattering effects. This requires
the solution of an enormously large number of single-scattering problems to attain 
a reasonable numerical accuracy in geometrically challenging configurations. Here we
propose a novel and effective Krylov subspace method suitable for the use of high frequency
integral equation techniques and significantly accelerates the convergence of Neumann
series. We additionally complement this strategy utilizing a  preconditioner based upon
Kirchhoff approximations that provides a further reduction in the overall computational
cost.
\end{abstract}

\section{Introduction}
\label{sec1}

In the last two decades significant advances have taken place in
the realm of computational scattering with notable theoretical as well as
practical contributions in the domains of finite elements
\cite{HesthavenWarburton04,DaviesEtAl09,Boffi10} and integral equations
\cite{BrunoKunyansky01,AminiProfit03,BanjaiHackbusch05,TongChew10,BrunoEtAl13}.
However, simulation strategies based upon the former are usually restricted to
low and mid frequency applications. Indeed, use of finite element
methods in exterior scattering simulations requires not only utilization
of an artificial interface to truncate the infinite computational domain
but also introduction of appropriate absorbing boundary conditions on
this interface to effectively replicate the behaviour of solution at infinity
\cite{Antoine08, EnquistMajda, Givoli04, GroteKirsch07, GroteSim11}.
This, in return, renders finite-element methods impractical in high-frequency applications
and may result in a loss of accuracy and increased computational cost.
Moreover, this difficulty is further amplified on models involving multiple scatterers,
such as the one treated in the present paper, because the distance that separates the
obstacles naturally increases the size of the truncated domain. Integral equation methods,
in contrast, are more adequate for these situations since, on the one hand, they explicitly
enforce the radiation condition by simply choosing an appropriate \emph{outgoing}
fundamental solution and, on the other hand, they are solely based on the knowledge
of solution confined only to the scatterers which, in surface scattering applications,
provides a dimensional reduction in computational domain \cite{ColtonKress92}.
Nevertheless, they deliver dense linear
systems whose sizes increase in proportion to $k^{p}$ with increasing wavenumber
$k$ where $p$ is the dimension of the computational manifold.

Broadly speaking, the success of integral equation approaches in high-frequency
simulations is directly linked with the incorporation of asymptotic characteristics of
the unknown into the solution strategy. This is essentially the path we follow in
this manuscript, since it transforms the problem into the determination of
a new unknown whose oscillations are virtually independent of frequency. While pioneering
work in this direction is due to Ned\'{e}l\'{e}c et al. \cite{AbboudEtAl94,AbboudEtAl95}
who, in two-dimensional simulations, have provided a reduction from $\mathcal{O}(k)$
to $\mathcal{O}(k^{1/3})$ in the number of degrees of freedom needed to obtain a
prescribed accuracy, the single-scattering algorithm of Bruno et al.
\cite{BrunoEtAl04} (based on a combination of  Nystr\" om method,
extensions of the method of stationary phase, and a change of variables around
the shadow boundaries) has had a significant impact as it has demonstrated the
possibility of $\mathcal{O}(1)$ solution of surface scattering problems (see
\cite{BrunoGeuzaine07} for a three-dimensional variant).
Alternative implementations of this approach built on a collocation and geometrical
theory of diffraction combo \cite{Giladi07}, a collocation and steepest descent
amalgamation \cite{HuybrechsVandewalle07}, and a p-version Galerkin interpretation
\cite{DominguezEtAl07} have later appeared. In this latter setting, Ecevit et al.
\cite{EcevitOzen16} have recently developed a rigorous method which demands, for any convex
scatterer, an increase of $\mathcal{O}(k^{\epsilon})$ (for any $\epsilon >0$) in the number
of degrees of freedom to maintain a prescribed accuracy independent of frequency.

The single-scattering algorithm \cite{BrunoEtAl04} has been successfully extended by Bruno et
al. \cite{BrunoEtAl05} to encompass the high-frequency multiple-scattering problems considered
in this paper, relating specifically to a finite collection of convex obstacles. Roughly speaking,
the approach in \cite{BrunoEtAl05} was based on: 1) Representation of the overall solution
as an infinite superposition of single scattering effects through use of a Neumann series,
2) Determination of the phase associated with each one of these effects using a spectral
geometrical optics solver, and 3) Utilization of the high-frequency single scattering algorithm
\cite{BrunoEtAl04} for the frequency independent evaluation of these effects. While every
numerical implementation in \cite{BrunoEtAl05} has displayed the spectral convergence
of Neumann series for two convex obstacles, unfortunately, a rigorous proof of this fact
was not available. Indeed, we have later shown for several convex obstacles in both
two-~\cite{EcevitReitich09} and three-dimensional~\cite{AnandEtAl10} settings that the
Neumann series can be rearranged into contributions associated with \emph{primitive periodic orbits}
and an explicit rate of convergence formula can be rigorously derived on each periodic orbit
in the high-frequency regime. While, on the one hand, these analyses depict the convergence
of Neumann series for all sufficiently large wavenumbers $k$, on the other hand, the rate
of convergence formulas display that convergence can be rather slow particularly when
(at least) one pair of nearby obstacles exists. This analysis of the rate convergence
\cite{EcevitReitich09,AnandEtAl10} was performed by using \emph{double layer} potentials. 
In this work, we show that use of \emph{combined field integral equations} lead to the same
rate of convergence. Accordingly, novel mechanisms are much
needed for the accelerated solution of multiple scattering problems that retain the
frequency independent operation count underlying the algorithm in \cite{BrunoEtAl05}.
However, this is a rather challenging task since the algorithm in \cite{BrunoEtAl05}
undeviatingly rests on reducing the problem, at each iteration, to the computation of an
unknown with a single-valued phase, and thus any strategy aimed at accelerating the
convergence of Neumann series must also preserve the phase information related with
the iterates.

In this paper, we develop a Krylov subspace method that significantly accelerates the
convergence of Neumann series, in particular in the case  where the distance between
obstacles decreases hence deteriorating the rate of convergence. This method is well
adapted to the high frequency aspect of the present problem as it retains the phase
information associated with the iterates and delivers highly accurate solutions in a small
number of iterations.
Note specifically that a direct implementation of Krylov subspace methods inhibits the use of
the algorithm in \cite{BrunoEtAl05} as this makes it impossible to track the phase information of the
corresponding iterates. As we shall see, a natural attempt to overcome this issue would be to simply
use the binomial formula, however, this disrupts the convergence of the method as displayed in the
numerical results. We defeat this additional difficulty by introducing an alternative
numerically stable decomposition of the iterates. In summary, our approach is based on three main elements:
1) Utilization of an appropriate formulation of the multiple scattering problem in the form of an operator
equation of the second kind, 2) Alternative representation of the associated Krylov subspaces
so as to guarantee that basis elements are single-phased and thus retain the frequency
independent operation count underlying the algorithm in \cite{BrunoEtAl05}, and
3) A novel decomposition of the iterates entering in a (standard) Krylov
recursion to prevent instabilities that would otherwise arise in a typical
implementation based on binomial identity. Indeed, as depicted in our numerical
implementations, the resulting methodology is immune to numerical instabilities as
it removes the additive cancellations arising from a direct use of binomial theorem.
Moreover, it provides additional savings in the number of needed iterations when
compared with the classical Pad\'{e} approximants used in \cite{BrunoEtAl05}.

We additionally complement our Krylov subspace approach utilizing a preconditioner
based upon Kirchhoff approximations to further reduce the number of iterations needed to
obtain a given accuracy. Indeed, since the knowledge of the illuminated regions at
each iteration are readily available through the geometrical optics solver we have
used to precompute the phase of multiple scattering iterations, essentially the only
additional computation needed for the application of this preconditioner is the use of
stationary phase method to deal with non-singular integrals wherein the only stationary
points are the target ones. This kind of \emph{dynamical} preconditioning is unusual
and its originality resides in the fact that the location of illuminated regions varies at
each reflection. This clearly distinguishes our preconditioning strategy from classical
approaches where the preconditioners are usually \emph{steady} by design.

While the success of this Kirchhoff preconditioner is clearly
displayed in our numerical tests, the utilization of Kirchhoff approximations for the multiple
scattering iterations naturally arises the question of convergence of the associated Neumann
series. We address this problem by showing that this series converges for each member
of a general general class of functions, and explain the exact sense in which the spectral
radius of the Kirchhoff operator is strictly less than 1. The importance of this result is twofold.
First, it verifies that the multiple scattering problem can be solved by using solely the
Kirchhoff technique, and further it rigorously answers the validity
of our preconditioning strategy.

The rest of the paper is organized as follows. In \S\ref{sec:Formulation}, we introduce the
scattering problem and provide a comparison of the equivalent differential and integral equation
formulations of multiple scattering problems. \S\ref{sec:Convergence} is reserved for a
comparison of convergence characteristics of these approaches. In \S\ref{sec:FreqIndep}, we
provide a short review of the algorithm in \cite{BrunoEtAl05} as the ideas therein lie at the core
of frequency independent evaluation of multiple scattering iterations as well as the iterates
associated with our newly proposed Krylov subspace method detailed in \S\ref{sec:Krylov}. In \S\ref{sec:Kirchhoff},
we explain how this Krylov subspace approach can be preconditioned while utilizing Kirchhoff approximations.
Finally, in \S\ref{sec:Numerical}, we present numerical implementations validating our newly proposed methodologies.

%%%%%%%%%%%%%%%%%%%%%%%%%%%%%%%%%%%%%%%%%%%%%%%%%

\section{Scattering problem and multiple scattering formulations}
\label{sec:Formulation}

Given an incident field $u^{\rm inc}$ satisfying the Helmholtz equation in $\mathbb{R}^n$ ($n=2,3$),
we consider the solution of sound-soft scattering problem
\begin{equation} \label{eq:sound-soft}
	\left\{ \hspace{-0.1cm}
		\begin{array}{l}
			\left( \Delta + k^{2} \right) u = 0
			\quad
			\text{in } \mathbb{R}^n \backslash \Omega \vspace{0.15cm} 
			\\
			u = -u^{\rm inc}
			\quad \text{on } \partial \Omega \vspace{0.2cm} 
			\\
			\lim_{|x| \to \infty} |x|^{(n-1)/2} \left( \partial_{\left| x \right|} - i k \right) u(x) = 0
		\end{array}
	\right.
\end{equation}
in the exterior of a smooth compact obstacle $\Omega \subset \mathbb{R}^n$. Potential theoretical
considerations entail that \cite{ColtonKress92} the \emph{scattered field} $u$ satisfying
\eqref{eq:sound-soft} admits the single-layer representation
\[
	u(x) = - \int_{\partial K} \Phi(x,y) \, \eta(y) \, ds(y)
\]
where
\begin{equation*} %\label{eq:eta_def}
	\eta = \partial_{\nu} \left(u+u^{\rm inc}\right)
	\quad
	\text{on } \partial \Omega
\end{equation*}
is the unknown \emph{normal derivative of the total field} (called the \emph{surface current} in
electromegnatics),
%(also known as the \emph{surface current} in electromagnetics),
$\nu$ is the exterior unit normal to ${\partial \Omega}$,
\[
	\Phi(x,y) =
	\left\{
		\begin{array}{cl}
			\dfrac{i}{4} \ H_{0}^{(1)}(k|x-y|), & n=2, \vspace{0.2cm}
			\\
			\dfrac{1}{4\pi} \dfrac{e^{ik \left| x-y\right|}}{\left| x -y \right|}, & n=3,
		\end{array}
	\right.
\]
is the fundamental solution of the Helmholtz equation, and $H_{0}^{(1)}$ is the Hankel function of
the first kind and order zero. Although $\eta$ can be recovered through a variety of integral equations
\cite{ColtonKress92}, we use the uniquely solvable \emph{combined field integral
equation} (CFIE)
\begin{equation}
\label{eq:CFIE}
	\eta (x) 
	- \int_{\partial \Omega}
	\left( \partial_{\nu(x)} +ik \right) G(x,y) \,
	\eta(y) \, ds(y)
	= f(x),
	\quad
	x \in \partial \Omega
\end{equation}
where $G = -2 \Phi$ and $f(x) = 2 \left( \partial_{\nu(x)} + ik \right) u^{\rm inc}(x)$.

In case the obstacle $\Omega$ consists of finitely many disjoint sub-scatterers $\Omega_1, \ldots, \Omega_J$,
denoting the restrictions of $\eta$ and $f$ to $\partial \Omega_j$ by $\eta_j$ and $f_j$
so that
\begin{equation} \label{eq:etaandf}
	\eta = \left( \eta_1,\ldots,\eta_J \right)^{t}
	\qquad
	\text{and}
	\qquad
	f = \left( f_1,\ldots,f_J \right)^{t},
\end{equation}
equation \eqref{eq:CFIE} gives rise to the coupled system of integral equations
\begin{equation} \label{eq:coupledIE}
	\left( \mcI - \mcS \right) \eta = f
\end{equation}
where
\begin{equation*} %\label{eq:R}
	\left( \mcS_{jj'} \eta_{j'} \right)(x)
	=\int_{\partial \Omega_{j'}} \! \! \!
	\left( \partial_{\nu(x)} + ik \right) G(x,y) \,
	\eta_{j'}(y) \, ds(y), \qquad x\in \partial \Omega_{j}.
\end{equation*}
In connection with the operator $\mcI-\mcS$, the following result will be useful in extending our
two-dimensional results in \cite{EcevitReitich09} concerning the convergence of multiple scattering
iterations to the case of CFIE.
\begin{theorem} \label{thm:diagonal}
For each $k > 0$, the diagonal operator
$\mathcal{D} = \operatorname{diag} \left( \mcI - \mcS \right) : L^2 \left( \partial \Omega \right) \to L^2 \left( \partial \Omega \right)$
is continuous with a continuous inverse. Moreover, if each $\Omega_j$ is star-like with respect to a point in its interior, then given $k_0 > 0$
there exists a constant $C_{k_0}> 0$
such that
\begin{equation} \label{eq:Dest}
	\Vert \mathcal{D}^{-1} \Vert_2 \le C_{k_0}
\end{equation}
for all $k \ge k_0$.
\end{theorem}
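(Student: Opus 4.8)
The plan is to reduce everything to a single obstacle and then treat the two assertions separately. Since $\mcI$ is itself block diagonal, $\mathcal{D}=\operatorname{diag}(\mathcal{D}_1,\dots,\mathcal{D}_J)$ with $\mathcal{D}_j:=\mcI-\mcS_{jj}$ the single-obstacle CFIE operator on $\partial\Omega_j$; and because $L^2(\partial\Omega)=\bigoplus_j L^2(\partial\Omega_j)$ orthogonally with $\mathcal{D}$ block diagonal, $\mathcal{D}^{-1}=\operatorname{diag}(\mathcal{D}_1^{-1},\dots,\mathcal{D}_J^{-1})$ and $\|\mathcal{D}^{-1}\|_2=\max_j\|\mathcal{D}_j^{-1}\|_2$, so it suffices to establish both claims for a single star-like $\Omega_j$. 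For continuous invertibility at fixed $k>0$: smoothness of $\partial\Omega_j$ makes the kernel $(\partial_{\nu(x)}+ik)G(x,y)$ of $\mcS_{jj}$ weakly singular, hence $\mcS_{jj}$ is compact on $L^2(\partial\Omega_j)$ and $\mathcal{D}_j$ is Fredholm of index zero, so only injectivity is at issue. Using $G=-2\Phi$ one identifies $\mathcal{D}_j=2(\tfrac12\mcI+K_j'+ik\,S_j)$ with $S_j$ the single-layer and $K_j'$ the adjoint double-layer operator on $\partial\Omega_j$; if $\mathcal{D}_j\varphi=0$, the jump relations applied to the single-layer potential $v=S_j\varphi$ show that the interior trace of $v$ satisfies the impedance condition $\partial_\nu v+ik\,v=0$ on $\partial\Omega_j$. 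Green's first identity then forces $v$ to vanish on $\partial\Omega_j$, whence $v\equiv0$ in $\Omega_j$; since $v$ is continuous across $\partial\Omega_j$ it solves the exterior Dirichlet problem with zero data and therefore vanishes outside as well by uniqueness under the radiation condition, and the normal-derivative jump gives $\varphi=0$. This is the classical resonance-free property of the combined-field formulation \cite{ColtonKress92}, and with Fredholmness the open mapping theorem yields the bounded inverse.

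The frequency-uniform bound \eqref{eq:Dest} reduces, via the same block decomposition, to showing $\|\mathcal{D}_j^{-1}\|_{L^2(\partial\Omega_j)\to L^2(\partial\Omega_j)}\le C_j$ for all $k\ge k_0$ with $C_j$ independent of $k$. Here I would invoke the wavenumber-explicit bounds on the inverse of the combined-field operator $\tfrac12\mcI+K_j'+ik\,S_j$ that are available precisely under the standing hypothesis that $\Omega_j$ is star-like with respect to an interior point: for such domains the $L^2$-norm of this inverse is bounded uniformly for all large $k$, the proof hinging on a Rellich-type identity whose decisive boundary term is sign-definite exactly because $\Omega_j$ is star-shaped and the coupling parameter is of order $k$. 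Patching this high-frequency bound on $[k_1,\infty)$ together with the boundedness of $k\mapsto(\tfrac12\mcI+K_j'+ik\,S_j)^{-1}$ on the compact interval $[k_0,k_1]$ — which follows from continuity in $k$ and the fixed-$k$ invertibility just proved — gives a single constant for all $k\ge k_0$; undoing the factor $2$ and maximizing over $j$ produces $C_{k_0}$.

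The genuinely hard step is this frequency-uniform single-obstacle estimate. The fixed-$k$ invertibility is soft — compactness plus the classical uniqueness argument — but yields no control of $\|\mathcal{D}_j^{-1}\|_2$ as $k\to\infty$, and no perturbative or Neumann-series reasoning can, since neither $K_j'$ nor $k\,S_j$ is small in operator norm at high frequency. The effective mechanism is the Rellich/Morawetz estimate for the layer potentials that exploits star-shapedness, and the delicate points in carrying it through are: (i) checking that the coupling $ik$ in \eqref{eq:CFIE} has the sign and magnitude needed for the identity to produce a one-sided, coercive estimate; (ii) keeping straight the normalization relating $\mathcal{D}_j$, the operator $\tfrac12\mcI+K_j'+ik\,S_j$, and the operator as normalized in the wavenumber-explicit literature; and (iii) confirming that the constant is uniform over all $k\ge k_0$ rather than merely for $k$ large, which is where the compact-interval patching enters. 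Once the single obstacle is handled, passing to the finite disjoint union is immediate from the block-diagonal structure used at the outset.
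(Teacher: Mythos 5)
Your proposal is correct and follows essentially the same route as the paper: reduce to the diagonal blocks $\mcI-\mcS_{jj}$ and invoke the wavenumber-explicit bound for the single-obstacle combined-field operator on star-shaped domains, which the paper obtains simply by citing \cite[Theorem 4.3]{Chandler-WildeMonk08} — precisely the Rellich-identity result whose mechanism you sketch. The extra material you supply (the fixed-$k$ Fredholm-plus-uniqueness argument and the compact-interval patching) is sound but is already subsumed in that cited theorem, so the paper dispenses with it.
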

\begin{proof}
This is immediate since $\mathcal{D}$ is a diagonal operator and, as shown in \cite[Theorem 4.3]{Chandler-WildeMonk08},
each operator $\mcI-\mcS_{jj}$ $(j=1,\ldots,J)$ on its diagonal satisfies inequality \eqref{eq:Dest}.
\end{proof}

Multiplying equation \eqref{eq:coupledIE} with the inverse of $\mathcal{D}$ yields the equivalent operator equation of the
second kind
\begin{equation} \label{eq:OE}
	\left( \mcI - \mcT \right) \eta = g
\end{equation}
where
\begin{equation} \label{eq:T}
	\mcT_{jj'} =
	\left\{ \!\!
		\begin{array}{cl}
			0, & j=j', \\
			\left( \mcI - \mcS_{jj} \right)^{-1} \mcS_{jj'}, & j \ne j',
		\end{array}
	\right.
\end{equation}
and
\[
	g = \left( g_1,\ldots,g_J \right)^{t}
\]
with $g_j = \left( \mcI - \mcS_{jj} \right)^{-1} f_j$.
Under suitable restrictions on the geometry of scatterers, the solution of the operator equation
\eqref{eq:OE} is given by the Neumann series \cite{EcevitReitich09,AnandEtAl10}
\begin{equation} \label{eq:Neumann}
	\eta = \sum_{m=0}^{\infty} \eta^{m}
\end{equation}
where the \emph{multiple scattering iterations} 
\begin{equation} \label{eq:multscat}
	\eta^{m} = (\eta_{1}^{m}, \ldots, \eta_{J}^{m})^t
\end{equation}
are defined by
\begin{equation} \label{eq:etam}
	\eta^{m} =
	\left\{
		\begin{array}{cl}
			g, & m = 0, \\
			\mcT\eta^{m-1}, & m \ge 1.
		\end{array}
	\right.
\end{equation}

%*************** Yassine: Should we exclude this part? We are not using it!!
%Before we continue further, let us note that the following observation renders a variety of alternative
%directions (not discussed in this paper) for the accelerated solution of multiple scattering problem
%\eqref{eq:OE} provided, of course, a few of the leading eigenvalues and associated eigenfunctions
%of the iteration operator $\mcT$ can be a priori determined.
%
%\begin{theorem}
%For each $k >  0$, the iteration operator $\mcT : L^2 \left( \partial \Omega \right) \to L^2 \left( \partial \Omega \right)$ is compact.
%\end{theorem}
%\begin{proof}
%This follows from the fact that each $\left( \mcI - \mcS_{jj} \right)^{-1}$ is continuous and, for 
%$j \ne j'$, $\mcS_{jj'}$ being an integral operator with a continuous kernel is compact.
%\end{proof}
%****************

As was presented in \cite{Balabane04}, the multiple scattering problem described above
possesses an equivalent differential equation formulation. Naturally, the convergence analysis
carried out in \cite{Balabane04} is directly linked with that of the Neumann series \eqref{eq:Neumann}
and here we present the exact connection. Indeed, the fields $u_j$ given by the single-layer potentials
\[
	u_j(x) = - \int_{\partial \Omega_j} \Phi(x,y) \, \eta_j(y) \, ds(y)
\]
in connection with the components of $\eta$ in \eqref{eq:etaandf} correspond precisely to the
unique solutions of the exterior sound-soft scattering problems
\[
	\left\{ \hspace{-0.1cm}
		\begin{array}{l}
			\left( \Delta + k^{2} \right) u_j= 0
			\quad \text{in } \mathbb{R}^n \backslash \Omega_j, \vspace{0.15cm} 
			\\
			u_{j} = -u^{\rm inc} - \sum_{j' \ne j} u_{j'} 
			\quad \text{on } \partial \Omega_j, \vspace{0.2cm} 
			\\
			\lim_{|x| \to \infty} |x|^{(n-1)/2} \left( \partial_{\left| x \right|} - i k \right) u_j(x) = 0,
		\end{array}
	\right.
\]
and they provide the decomposition of the scattered field $u$ as
\begin{equation} \label{eq:decompose_u}
	u = \sum_{j=1}^{J} u_j.
\end{equation}
On the other hand, the iterated fields $u_{j}^{m}$ given by the single-layer potentials
\begin{equation} \label{eq:umj}
	u_j^{m}(x) = - \int_{\partial \Omega_j} \Phi(x,y) \, \eta_j^{m}(y) \, ds(y)
\end{equation}
in relation with the components of $\eta^{m}$ in \eqref{eq:multscat} are precisely the unique
solutions of the exterior sound-soft scattering problems
\[
	\left\{ \hspace{-0.1cm}
		\begin{array}{l}
			\left( \Delta + k^{2} \right) u_j^{m} = 0
			\quad \text{in } \mathbb{R}^n \backslash \Omega_j, \vspace{0.15cm} 
			\\
			u_{j}^{m} = -h_{j}^{m} 
			\quad \text{on } \partial \Omega_j,  \vspace{0.2cm} 
			\\
			\lim_{|x| \to \infty} |x|^{(n-1)/2} \left( \partial_{\left| x \right|} - i k \right) u_{j}^{m}(x) = 0
		\end{array}
	\right.
\]
with
\[
	h_{j}^{m} =
	\left\{\hspace{-0.1cm}
		\begin{array}{ll}
			u^{\rm inc}, & m=0, \vspace{0.2cm}
			\\
			\sum_{j' \ne j} u_{j'}^{m-1}, & m \ge 1,
		\end{array}
	\right.
\]
and thus, in case the Neumann series \eqref{eq:Neumann} converges, each solution $u_j$ can
be expressed as the superposition
\begin{equation} \label{eq:series}
	u_{j} = \sum_{m=0}^{\infty} u_{j}^{m}.
\end{equation}

%%%%%%%%%%%%%%%%%%%%%%%%%%%%%%%%%%%%%%%%%%%%%%%%%

\section{Convergence of multiple scattering iterations}
\label{sec:Convergence}

Preliminary work on the justification of identity \eqref{eq:series} in a three-dimensional setting has
appeared in \cite{Balabane04}. Indeed, while \cite[Theorem 1]{Balabane04} establishes uniqueness
of decomposition \eqref{eq:decompose_u}, \cite[Theorem 3]{Balabane04} justifies the convergence of
the series in \eqref{eq:series} under suitable restrictions on the geometry of the obstacles $\Omega_j$
as stated in the next theorem.
\begin{theorem}[cf. \cite{Balabane04}] \label{thm:Balabane}
Assume that $u^{\rm inc} \in H^{1}(\partial \Omega)$ and, for $j=1,\ldots, J$, the obstacle $\Omega_j$
is non-trapping in the sense that 
\begin{equation*}
	\beta_j = \dfrac{1}{\operatorname{diam} \left( \Omega_j \right)} \, \sup_{y \in \Omega_j} \inf_{x \in \partial \Omega_j} \nu(x) \cdot \left( x-y \right) > 0.
\end{equation*}
Let
\[
	\delta = \max_{1\le j \le J} \operatorname{diam} \left( \Omega_j \right),
	\quad
	d = \min_{1 \le j, j' \le J} \operatorname{dist} \left( \Omega_j, \Omega_{j'} \right),
	\quad
	\left| \partial \Omega \right| = \text{surface area of } \partial \Omega.  
\]
Then there exists a constant $\beta > 0$ that depends on $\beta_1,\ldots,\beta_J$ such that if
\[
	\dfrac{\delta \, d^2}{\left| \partial \Omega \right|} \, \beta
	> k^3 \left( 1+ (\delta k)^2 \right) \sqrt{1+2 \, ( \delta k)^2},
\]
then, for $j=1,\ldots,J$, identity \eqref{eq:series} holds in the sense of convergence in
$H^{1}_{\text{loc}} \left( \mathbb{R}^n \backslash \Omega_j \right)$.
\end{theorem}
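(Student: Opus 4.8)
The plan is to read the identity \eqref{eq:series} as the geometric convergence of the fixed-point iteration $\eta^{m}=\mcT\eta^{m-1}$, transplanted to the level of fields. It suffices to exhibit a norm in which the multiple scattering map $(u_1^{m-1},\dots,u_J^{m-1})\mapsto(u_1^{m},\dots,u_J^{m})$ is a strict contraction whenever the displayed inequality holds. Once $\|u_j^{m}\|$ is shown to decay geometrically in $H^1$ in a neighbourhood of $\partial\Omega_j$, interior elliptic regularity for the Helmholtz operator together with the Sommerfeld radiation condition promotes this to geometric decay of $\|u_j^{m}\|_{H^1(B_R\setminus\Omega_j)}$ for each fixed ball $B_R$; hence every partial sum $\sum_{m=0}^{M}u_j^{m}$ is Cauchy in $H^1_{\mathrm{loc}}(\R^n\setminus\Omega_j)$. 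The limit is then identified with $u_j$ by linearity and by the uniqueness of the decomposition \eqref{eq:decompose_u} (cf. \cite[Theorem~1]{Balabane04}). Everything therefore reduces to one quantitative estimate for the exterior Dirichlet problem on a single non-trapping obstacle, applied repeatedly over the collection.

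The core of that estimate is a Rellich--Morawetz identity for $u_j^{m}$ on $\R^n\setminus\Omega_j$ with the radial multiplier $x-y_j$, where $y_j\in\Omega_j$ realizes $\beta_j$. Starting from the Rellich quadratic identity satisfied by solutions of $(\Delta+k^{2})u_j^{m}=0$, integrating over the region between $\partial\Omega_j$ and a large sphere $S_R$, and letting $R\to\infty$ so that the radiation condition annihilates the $S_R$-contribution, one arrives at an identity whose $\partial\Omega_j$-boundary terms control $\|\partial_\nu u_j^{m}\|_{L^2(\partial\Omega_j)}$ and the tangential Dirichlet energy of $u_j^{m}$ on $\partial\Omega_j$. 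The non-trapping hypothesis $\nu(x)\cdot(x-y_j)\ge\beta_j\,\mathrm{diam}(\Omega_j)>0$ is exactly what renders the governing boundary quadratic form coercive, with coercivity constant proportional to $\beta_j$. Tracking the size $\delta$ of the multiplier and the $H^1(\partial\Omega_j)$-norm of the data through this identity yields an a priori bound of the schematic form $\|\partial_\nu u_j^{m}\|_{L^2(\partial\Omega_j)}\le C\,\beta_j^{-1}\,Q(\delta k)\,\|h_j^{m}\|_{H^1(\partial\Omega_j)}$ with $Q$ an explicit polynomial; obtaining this with dependence on $\beta_j$, $\delta$ and $k$ sharp enough to feed the final threshold is the main technical burden.

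It remains to propagate the data between obstacles. Each $u_{j'}^{m-1}$ is the single-layer potential over $\partial\Omega_{j'}$ of $\eta_{j'}^{m-1}=\partial_\nu u_{j'}^{m-1}$ (see \eqref{eq:umj}), and for $j\ne j'$ both the kernel $\Phi(x,y)$ and the derivatives in $x$ required to evaluate $\|u_{j'}^{m-1}\|_{H^1(\partial\Omega_j)}$ are smooth on $\partial\Omega_j\times\partial\Omega_{j'}$ and decay in $d=\mathrm{dist}(\Omega_j,\Omega_{j'})$; a Cauchy--Schwarz estimate over $\partial\Omega_{j'}$ contributes $|\partial\Omega_{j'}|^{1/2}\le|\partial\Omega|^{1/2}$. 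This gives
\begin{equation*}
	\big\|h_j^{m}\big\|_{H^1(\partial\Omega_j)}
	=\Big\|\textstyle\sum_{j'\ne j}u_{j'}^{m-1}\Big\|_{H^1(\partial\Omega_j)}
	\;\le\; \frac{C\,|\partial\Omega|^{1/2}}{d^{2}}\,\widetilde Q(\delta k)\,
	\max_{j'\ne j}\big\|\partial_\nu u_{j'}^{m-1}\big\|_{L^2(\partial\Omega_{j'})}
\end{equation*}
for another explicit polynomial $\widetilde Q$, the power $d^{-2}$ arising from the decay of $\Phi$ together with the extra differentiation forced by the $H^1$-norm. Composing this inequality with the Rellich estimate of the previous paragraph, the scalar sequence $a^{m}:=\max_{j}\|\partial_\nu u_j^{m}\|_{L^2(\partial\Omega_j)}$ obeys $a^{m}\le q\,a^{m-1}$ with
\begin{equation*}
	q \;\le\; \frac{|\partial\Omega|}{\delta\,d^{2}}\cdot
	\frac{k^{3}\big(1+(\delta k)^{2}\big)\sqrt{1+2(\delta k)^{2}}}{\beta},
\end{equation*}
where $\beta>0$ depends only on $\beta_1,\dots,\beta_J$ (it is the minimum of the $\beta_j$ divided by the universal constants accumulated above). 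The hypothesis of the theorem is precisely the assertion $q<1$, so $a^{m}\to 0$ geometrically; feeding this back through the trace theorem and the exterior/radiation estimates yields the geometric decay of $\|u_j^{m}\|_{H^1(B_R\setminus\Omega_j)}$ invoked in the first paragraph.

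The two-dimensional case is entirely analogous: the large-argument asymptotics of $H_0^{(1)}$ give $\Phi$ and its derivatives the same decay-in-$d$ behaviour, so the propagation step goes through unchanged, and the Rellich identity is dimension-independent. The principal obstacle, as already flagged, is the sharp quantitative Rellich--Morawetz estimate — extracting the precise powers of $\delta k$ and the clean $\beta_j^{-1}$ dependence — while a secondary subtlety is that working with $H^1$ (rather than $L^2$) Dirichlet data forces one to differentiate the coupling single-layer potentials, so the obstacles interact through first-order kernels; this is exactly what produces the $d^{-2}$ in the contraction factor, and hence the $d^{2}$ in the numerator of the displayed hypothesis.
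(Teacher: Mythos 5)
First, note what the paper actually does with this statement: it is not proved in the manuscript at all, but imported verbatim as Theorem~3 of \cite{Balabane04} (with the uniqueness of the decomposition \eqref{eq:decompose_u} being Theorem~1 there). So the relevant comparison is with Balabane's cited argument, and your overall strategy --- a wavenumber-explicit a priori bound for a single non-trapping exterior Dirichlet problem via a Rellich--Morawetz identity with multiplier $x-y_j$, composed with a propagation estimate between obstacles through the single-layer kernels, to produce a contraction factor $q<1$ under the stated threshold --- is indeed the right kind of argument and close in spirit to the cited proof.

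As a proof, however, the proposal has genuine gaps. The decisive step, namely the estimate $\Vert \partial_\nu u_j^m\Vert_{L^2(\partial\Omega_j)} \le C\,\beta_j^{-1} Q(\delta k)\Vert h_j^m\Vert_{H^1(\partial\Omega_j)}$ with powers of $k$ and $\delta k$ sharp enough to reproduce $k^3\left(1+(\delta k)^2\right)\sqrt{1+2(\delta k)^2}$, is exactly what you defer as ``the main technical burden''; as written, the threshold is reverse-engineered from the statement rather than derived, so the theorem is not actually established. The propagation step is also not as clean as claimed: in three dimensions $|\nabla_x\Phi(x,y)|$ for $x\in\partial\Omega_j$, $y\in\partial\Omega_{j'}$ is of size $(1+kd)/d^2$, and the $k/d$ contribution cannot be absorbed into a polynomial in $\delta k$ because $d$ and $\delta$ are unrelated parameters; hence the asserted bound $q\le \frac{|\partial\Omega|}{\delta d^2}\,\frac{k^3(1+(\delta k)^2)\sqrt{1+2(\delta k)^2}}{\beta}$ does not follow from the sketched bookkeeping, even schematically. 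Finally, the closing claim that the two-dimensional case is ``entirely analogous'' with the same decay in $d$ is incorrect --- $H_0^{(1)}(kr)$ decays like $(kr)^{-1/2}$, not $r^{-1}$ --- and the cited result is in fact established in a three-dimensional setting, so the 2D extension would require a separate (if routine) accounting of constants. The skeleton (contraction, Rellich multiplier exploiting star-shapedness, identification of the limit via the uniqueness theorem, promotion to $H^1_{\text{loc}}$ by interior regularity) is sound, but the quantitative core that the theorem actually asserts is missing.
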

As is clear, Theorem \ref{thm:Balabane} establishes convergence of the series \eqref{eq:series} for non-trapping
obstacles only if the wavenumber $k$ is sufficiently small. Work on rigorous justification of
the convergence of Neumann series \eqref{eq:Neumann} (and thus of identity \eqref{eq:series})
in high-frequency applications, on the other hand, reduces to our work \cite{EcevitReitich09} and
\cite{AnandEtAl10} that relates to a finite collection of smooth strictly convex (and thus non-trapping
in the sense of Theorem \ref{thm:Balabane}) obstacles in two- and three-dimensions, respectively. Indeed, as we have
shown in \cite{EcevitReitich09,AnandEtAl10}, the Neumann series \eqref{eq:Neumann} can be
rearranged into a sum over \emph{primitive periodic orbits} and a precise (asymptotically geometric)
rate of convergence $\mathcal{R}_{p}$ (where $p$ is the period of the orbit), that depends only
on the relative geometry of the obstacles $\Omega_j$, can be derived on each periodic orbit in the
asymptotic limit as $k \to \infty$.

To review these results, for the sake of simplicity of exposition, we assume that the scatterer $\Omega$
consists only of two smooth strictly convex obstacles $\Omega_1$ and $\Omega_2$ in which case there
are only two (primitive) periodic orbits (initiating from each $\Omega_j$ and traversing the obstacles in a
2-periodic manner) and relation \eqref{eq:etam} is equivalent to
\begin{equation} \label{eq:describe}
	\mathcal{D} \eta^{m}
	= f^{m},
	\qquad
	\left( m \ge 0 \right),
\end{equation}
where
\[
	f^{0} = f = 2 \left( \partial_{\nu} + ik \right) u^{\rm inc},
	\quad
	\text{on } \partial \Omega,
\]
and, for $m \ge 1$,
\begin{equation} \label{eq:fm}
	f^{m}=
	\begin{bmatrix}
		0 & \mcS_{12} \\
		\mcS_{21}  & 0
	\end{bmatrix} 
	\eta^{m-1}.
%	= 2 \left( \partial_{\nu} + ik \right)
%	\begin{bmatrix}
%		0 & \mcI \\
%		\mcI  & 0
%	\end{bmatrix}
%	u^{m-1},
%	\quad
%	\text{on } \partial \Omega,
\end{equation}
%with
%\[
%	u^{m-1} =
%	\begin{bmatrix}
%		u_1^{m-1} \vspace{0.1cm} \\
%		u_2^{m-1}
%	\end{bmatrix}.
%\]
In connection with identity \eqref{eq:describe}, Theorem \ref{thm:diagonal} implies in
two-dimensional configurations that, given any $k_0 >0$, there exists $C_{k_0} >0$
such that for any $k \ge k_0$
\begin{equation} \label{eq:etageomrate}
	\Vert \eta^{m+2} - \mathcal{R} \eta^{m} \Vert_{L^2 \left( \partial \Omega \right)}
%	\le \Vert \mathcal{D}^{-1} \Vert_{L^2 \left( \partial \Omega \right)} \,
%	\Vert f^{m+2} - \mathcal{R} f^{m} \Vert_{L^2 \left( \partial \Omega \right)}
	\le C_{k_0} \, \Vert f^{m+2} - \mathcal{R} f^{m} \Vert_{L^2 \left( \partial \Omega \right)}
\end{equation}
holds for any constant $\mathcal{R} \in \mathbb{C}$, and thus the aforementioned \emph{geometric rate of convergence} of the
Neumann series \eqref{eq:Neumann} is directly linked with that of the right-hand sides $f^{m}$.
Indeed, assuming that the incidence is a plane-wave $u^{\rm inc}(x) = e^{ik \, \alpha \cdot x}$
with direction $\alpha$ ($\left| \alpha \right| = 1$) with respect to which the obstacles $\Omega_1$
and $\Omega_2$ satisfy the \emph{no-occlusion condition} (which amounts to requiring that
there is at least one ray with direction $\alpha$ that passes between $\Omega_1$ and $\Omega_2$
without touching them), denoting by $a_j \in \partial \Omega_j$ the uniquely determined points
minimizing the distance between $\Omega_1$ and $\Omega_2$, and setting $d = \left| a_1 - a_2 \right|$,
we have the following relation among the leading terms $f_A^{m}$ in the asymptotic expansions
of $f^{m}$ which extends our analyses in \cite{EcevitReitich09,AnandEtAl10} to the case of CFIE.

\begin{theorem} \label{thm:2per}
There exist constants $C = C \left( \Omega, \alpha \right) > 0$,
$\delta = \delta \left( \Omega,\alpha \right) \in \left( 0, 1 \right)$, and
$\mathcal{R}_{2} = \mathcal{R}_2 \left( \Omega, k \right) \in \mathbb{C}$
with the property that, for all $m \ge 1$,
\begin{equation} \label{eq:geomrate}
	\Vert f_A^{m+2}-\mathcal{R}_{2} f_A^{m} \Vert_{L^2 \left( \partial \Omega \right)}
	\le C \, k \, \delta^{m}.
\end{equation}
The constant $\mathcal{R}_2$ is given in two-dimensional configurations by
\begin{equation*}
	\mathcal{R}_2 = e^{2ikd} 
	\left(
		\sqrt{\left( 1 + d \kappa_{1} \right) \left( 1 + d \kappa_{2} \right)} \times
		\left[ 1 + \sqrt{1 - \left[ \left( 1 + d \kappa_{1})(1 + d \kappa_{2} \right) \right]^{-1}}\ \right]
	\right)^{-1}
\end{equation*}
where $\kappa_j$ is the curvature at the point $a_j$; and in three-dimensional
configurations
\begin{equation*}
	\mathcal{R}_2 = e^{2ikd} 
	\left(
		\sqrt{ \det \left[ \left( I + d \kappa_{1} \right) \left( I + d \kappa_{2} \right) \right]} \times
		\det \left[ I + \sqrt{I - \left[ T \left( I + d \kappa_{1} \right) T^{-1} \left( I + d \kappa_{2} \right) \right]^{-1}}\ \right]
	\right)^{-1}
\end{equation*}
where $I$ is the identity matrix,
\[
	\kappa_{j} =
	\begin{bmatrix}
		\kappa_{1} (a_j) & 0
		\\
		0 & \kappa_{2} (a_j)
	\end{bmatrix}
\]
is the matrix of principal curvatures at the point $a_j$, and $T$ is the rotation matrix
determined by the relative orientation of the surfaces $\partial \Omega_j$ at the points $a_j$.
\end{theorem}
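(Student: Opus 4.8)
The plan is to derive the recursion \eqref{eq:geomrate} by tracking, through the map $f^{m}\mapsto f^{m+1}$, how the leading asymptotic term of each right-hand side is produced. The starting point is the description \eqref{eq:describe}--\eqref{eq:fm}: the iterate $\eta^{m}$ solves $\mathcal{D}\eta^{m}=f^{m}$ on each obstacle, and $f^{m+1}$ is obtained by applying the off-diagonal transfer operators $\mcS_{12},\mcS_{21}$ to $\eta^{m}$. Thus two ingredients are needed. First, a high-frequency asymptotic expansion of the single-obstacle solution operator $(\mcI-\mcS_{jj})^{-1}$ — that is, an expansion of $\eta_j^{m}$ given that $f_j^{m}$ has a known leading term $f_{A,j}^{m}$ carrying a single phase. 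This is exactly the Melrose--Taylor / Bruno--et-al.\ stationary-phase machinery already used in \cite{EcevitReitich09,AnandEtAl10} for the double-layer case, and the first task is to check that replacing the double-layer kernel by the combined-field kernel $G=-2\Phi$ with the $(\partial_{\nu}+ik)$ weighting changes only lower-order terms in this expansion, leaving the leading symbol — hence the leading amplitude and the phase — of $\eta_j^{m}$ unchanged up to an explicit (frequency-independent) multiplicative factor. Second, an asymptotic evaluation of the transfer integral $(\mcS_{jj'}\eta_{j'}^{m})(x)=\int_{\partial\Omega_{j'}}(\partial_{\nu(x)}+ik)G(x,y)\,\eta_{j'}^{m}(y)\,ds(y)$ by the method of stationary phase: since $\eta_{j'}^{m}$ carries a single, geometrically-optical phase inherited from reflections off the two obstacles, the total phase $\mathrm{phase}(\eta_{j'}^{m})(y)+k|x-y|$ has, near the periodic orbit, a nondegenerate stationary point at the orbit foot $a_{j'}$, and the leading contribution produces $f_{A}^{m+2}$ on the \emph{same} obstacle two steps later.

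Carrying this out, the key steps in order are: (1) set up notation for the leading asymptotic symbols of $f_A^{m}$ and $\eta_A^{m}$ localized near the orbit points $a_1,a_2$, using the no-occlusion hypothesis to guarantee that away from a neighborhood of the orbit the amplitudes are $\OO(k^{-\infty})$ and do not feed into the leading term; (2) prove the single-obstacle mapping lemma for CFIE — that $(\mcI-\mcS_{jj})^{-1}$ acting on a single-phased right-hand side returns a single-phased function with leading amplitude multiplied by an explicit factor depending only on the local geometry and incidence angle — by comparison with the known double-layer statement and an examination of the principal symbols; (3) compute the stationary-phase asymptotics of the inter-obstacle operator $\mcS_{jj'}$ applied to such a function, isolating the Gaussian-curvature Jacobian factors $\sqrt{(1+d\kappa_1)(1+d\kappa_2)}$ (respectively the determinant version in $3$D) and the phase increment $e^{2ikd}$ that together assemble into $\mathcal{R}_2$; (4) iterate (2)--(3) twice to pass from $f_A^{m}$ to $f_A^{m+2}$ on the same obstacle, read off that the leading terms are related by multiplication by $\mathcal{R}_2$, and collect all the omitted lower-order contributions into the error; (5) track the $k$-powers and the geometric decay to show the remainder is bounded by $C\,k\,\delta^{m}$, with $\delta<1$ arising from $|\mathcal{R}_2|<1$ (a consequence of $d\kappa_j>0$ and the algebraic form of $\mathcal{R}_2$) together with uniform control of the amplitudes in $m$.

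The main obstacle I expect is step (2)--(3) done \emph{uniformly in $m$}: one must show that the asymptotic expansions of $\eta_j^{m}$ and of $\mcS_{jj'}\eta_{j'}^{m}$ hold with constants independent of the iteration index, so that the error terms accumulated over $m$ reflections still sum to something of the claimed size $C k \delta^{m}$ rather than growing with $m$. This requires that each application of the single-obstacle solver and of the transfer operator contracts (or at worst does not expand) the relevant norms of the amplitude, which is precisely where the uniform bound $\Vert\mathcal{D}^{-1}\Vert_2\le C_{k_0}$ from Theorem \ref{thm:diagonal} and the $|\mathcal{R}_2|<1$ estimate must be combined; controlling the derivatives of the slowly-varying amplitudes (needed to keep the stationary-phase remainders under control at every step) is the delicate bookkeeping. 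A secondary — but largely mechanical — point is verifying that the CFIE weighting $(\partial_{\nu}+ik)$ really does not alter the leading symbol: this is a symbol-calculus check that the added $ik$ term and the normal-derivative term contribute at the same or lower order and combine into the stated multiplicative constants, so that the final formula for $\mathcal{R}_2$ coincides verbatim with the double-layer one of \cite{EcevitReitich09,AnandEtAl10}.
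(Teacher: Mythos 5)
Your proposal follows essentially the same route as the paper: the paper's proof simply cites \cite[Theorems 3.4 and 4.1]{EcevitReitich09} (and \cite[Theorems 3.3 and 4.3]{AnandEtAl10} in three dimensions) for the geometric-rate estimate on the leading terms $\eta^{m+1}_{A,\ell}$, $\eta^{m-1}_{A,\ell}$ over compact subsets of the illuminated regions --- which already encapsulates your steps (1), (2), (4), (5), including the uniformity in $m$ you flag as the main difficulty --- and then, exactly as in your step (3), applies the stationary phase lemma to each component of \eqref{eq:fm} to transfer that recursion to $f_A^{m}$, with the observation that the CFIE kernel does not alter the leading asymptotics. So your plan is correct; it just reconstructs in detail the machinery the paper obtains by citation.
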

\begin{proof}
Assume first that the dimension is $n=2$. Writing $f^{m} = \left[ f^{m}_1 \, f^{m}_2 \right]^t$
and $f^{m}_A = \left[ f^{m}_{A,1} \, f^{m}_{A,2} \right]^t$, it suffices to show that, for $\ell = 1,2$,
\begin{equation} \label{eq:asest}
	\Vert f^{(m+2)}_{A,\ell}-\mathcal{R}_{2} f^{m}_{A,\ell} \Vert_{L^2 \left( \partial \Omega_{\ell} \right)}
	\le C_{\ell} \, k \, \delta^{m}
\end{equation}
for some constant $C_{\ell} = C_{\ell} \left( \Omega, \alpha \right)$. On the other hand,
\cite[Theorems 3.4 and 4.1]{EcevitReitich09} display that (a more general version of) this estimate
holds on any compact subset of the \emph{illuminated regions} (see the next section
for a precise definition of these regions) when $f^{m+2}_{A,\ell}$ and $f^{m}_{A,\ell}$ are replaced
by the leading terms $\eta^{m+1}_{A,\ell}$ and $\eta^{m-1}_{A,\ell}$ in the asymptotic expansions of
$\eta^{m+1}_{\ell}$ and $\eta^{m-1}_{\ell}$. Finally applying the stationary phase lemma \cite{Fedoryuk71}
to each component of identity \eqref{eq:fm}, the same techniques used to prove
\cite[Theorem 4.1]{EcevitReitich09} delivers estimate \eqref{eq:asest}.
In case $n=3$, the argument is the same and is based upon \cite[Theorems 3.3 and 4.3]{AnandEtAl10}.
\end{proof}

Although Theorem \ref{thm:2per} is valid under the no occlusion condition, extensive numerical tests
in \cite{EcevitReitich09,AnandEtAl10} display that the conclusion of Theorem \ref{thm:2per} is valid
not only when this condition is violated but also when the convexity assumption is conveniently
relaxed.

\begin{remark}
In light of estimates \eqref{eq:etageomrate}-\eqref{eq:geomrate}, for $M \gg \log k$,
we have
\[
	\eta
	= \sum_{m=0}^{\infty} \eta^{m}
	\sim \sum_{m=0}^{M} \eta^{m}
	+ \left( \eta^{M+1} + \eta^{M+2} \right)
	\sum_{m=0}^{\infty} \mathcal{R}_2^m
\]
which signifies that the Neumann series converges with the geometric rate $\mathcal{R}_2$. Note however that,
as the distance between the obstacles $\Omega_1$ and $\Omega_2$ decreases to
zero, $\left| \mathcal{R}_2 \right|$ increases to $1$, and thus convergence of the Neumann series significantly
deteriorates.
\end{remark}

The same remark is valid when the configuration consists of more than two subscaterers and involves
at least one pair of nearby obstacles. Indeed, as we have shown in \cite{EcevitReitich09,AnandEtAl10}, this is
also completely transparent from a theoretical point of view since, in this case, the Neumann series can be
completely dismantled into single-scattering effects and rearranged into a sum over \emph{primitive periodic
orbits} including, in particular, $2$-periodic orbits.

The next section is devoted to the description of how we adopt the high-frequency integral equation
method in \cite{BrunoEtAl05} to the evaluation of iterates arising in our Krylov subspace approach and
also in its preconditioning through Kirchhoff approximations. As explained in the introduction, the strength
of the work in \cite{BrunoEtAl05} is due to retaining information on the phases of multiple scattering iterations,
and therefore our Krylov subspace and Kirchhoff preconditioning strategies are also designed to posses
the same property.

%%%%%%%%%%%%%%%%%%%%%%%%%%%%%%%%%%%%%%%%%%%%%%%%%

\section{High-frequency integral equations for multiple scattering configurations}
\label{sec:FreqIndep}

For simplicity of exposition we continue to assume that the obstacle $\Omega$ consists only of two disjoint
sub-scatterers $\Omega_1$ and $\Omega_2$. In what follows, for $j, j' \in \{1,2\}$, we will always assume that
$j \ne j'$. In this case, relation \eqref{eq:describe} can be written,
for $j = 1,2$, in components as
\begin{equation} \label{eq:etanorefl}
	\left( \mcI - \mcS_{jj} \right) \eta_j^{0} = f^{0}_j
	\quad
	\text{on } \partial \Omega_j,
\end{equation}
and
\begin{equation} \label{eq:etarefl}
	\left( \mcI - \mcS_{jj} \right) \eta_j^{m} = \mcS_{jj'} \, \eta_{j'}^{m-1} 
	\quad
	\text{on } \partial \Omega_j,
\end{equation}
for $m \ge 1$.
As identity \eqref{eq:etanorefl} displays, $\eta^{0}_j$ is exactly the surface current generated by the incidence
$u^{\rm inc}$ on $\partial \Omega_j$ ignoring interactions between $\Omega_1$ and $\Omega_2$. Similarly, for
$m \ge 1$, equation \eqref{eq:etarefl} depicts that $\eta^{m}_j$ is precisely the surface current generated by the field $u^{m-1}_{j'}$ (note that
$\mcS_{jj'} \, \eta^{m-1}_{j'} = 2 \left( \partial_{\nu} + ik \right) u^{m-1}_{j'}$) acting as an incidence on
$\partial \Omega_j$ ignoring, again, interactions between $\Omega_1$ and $\Omega_2$.
Therefore identities \eqref{eq:etanorefl} and \eqref{eq:etarefl} entail that the Neumann series \eqref{eq:Neumann} completely dismantles the single
scattering contributions and allows for a representation of the surface current $\eta$ as a superposition
of these effects. More importantly, in geometrically relevant configurations, these observations allow us
to predetermine the phase $\varphi^{m}_j$ of $\eta^{m}_j$ and express it as the product of a highly
oscillating complex exponential modulated by a slowly varying amplitude in the form
\begin{equation} \label{eq:factor}
	\eta^{m}_j
	= e^{ik \, \varphi^{m}_j} \ \eta^{m, \, {\rm slow}}_j 
\end{equation}
and this, in turn, grants the frequency-independent solution of equations \eqref{eq:etanorefl}-\eqref{eq:etarefl}
as described in \cite{BrunoEtAl05}. To review the algorithm in \cite{BrunoEtAl05} and set the stage in the
rest of the paper, we first describe the phase functions $\varphi^{m}_j$ in combination with the
various regions they determine on the boundary of the scatterers, and we present one of the main
results in \cite{EcevitReitich09,AnandEtAl10} that displays the asymptotic characteristics of the amplitudes
$\eta^{m, \, {\rm slow}}_j$.

Indeed, in case the obstacles $\Omega_1$ and $\Omega_2$ are convex and satisfy the no occlusion
condition with respect to the direction of incidence $\alpha$, the phase $\varphi^{m}_j$ in \eqref{eq:factor}
is given by
\begin{equation} \label{eq:phases}
	\varphi^{m}_j =
	\left\{
		\begin{array}{ll}
			\phi^{m}_j, & m \text{ is even}, \vspace{0.2cm}
			\\
			\phi^{m}_{j'}, & m \text{ is odd}.
		\end{array}
	\right.
\end{equation}
Here, for any of the two \emph{obstacle paths} $\{ \Gamma^m_1 \}_{m \ge 0}$
and $\{ \Gamma^m_2 \}_{m \ge 0}$ defined by
\[
	\left( \Gamma_1^{2m} , \Gamma_1^{2m+1} \right) = \left( \partial \Omega_{1} , \partial \Omega_{2} \right)
	\qquad
	\text{and}
	\qquad
	\left( \Gamma_2^{2m} , \Gamma_2^{2m+1} \right) = \left( \partial \Omega_{2} , \partial \Omega_{1} \right)
\]
for all $m \ge 0$, the geometrical phase $\phi^{m}_{\ell}$ at any point $x \in \Gamma^m_{\ell}$ ($\ell=1,2$) is uniquely
defined as \cite{EcevitReitich09,AnandEtAl10}
\[
	\phi^{m}_{\ell}(x)= 
	\left\{ \! \!
		\begin{array}{ll}
			\alpha \cdot x,
			&
			m = 0, \\
			\alpha \cdot \mathcal{X}^{m}_{0}(x)
			+ \sum\limits_{r=0}^{m-1} |\mathcal{X}^{m}_{r+1}(x)-\mathcal{X}^{m}_{r}(x)|,
			& m \ge 1 \, ,
		\end{array}
	\right.
\]
where the points $(\mathcal{X}^{m}_0(x), \ldots, \mathcal{X}^{m}_{m}(x)) \in \Gamma^0_{\ell} \times \cdots \times \Gamma^{m}_{\ell}$
are specified by
\begin{align*}
	& \text{(a)} \
	\mathcal{X}^{m}_{m}(x) = x,
	\vspace{0.2cm}
	\\
	& \text{(b)} \
	\alpha \cdot \nu(\mathcal{X}^{m}_{0}(x)) < 0,
	\vspace{0.2cm}
	\\
	& \text{(c)} \
	(\mathcal{X}^{m}_{r+1}(x)-\mathcal{X}^{m}_{r}(x))
	\cdot \nu(\mathcal{X}^{m}_{r}(x)) > 0,
	\vspace{0.2cm}
	\\
	& \text{(d)} \
	\dfrac{\mathcal{X}^{m}_{1}(x)-\mathcal{X}^{m}_{0}(x)}
	{|\mathcal{X}^{m}_{1}(x)-\mathcal{X}^{m}_{0}(x)|}
	= \alpha - 2 \alpha \cdot \nu(\mathcal{X}^{m}_{0}(x)) \, \nu(\mathcal{X}^{m}_{0}(x)),
	\vspace{0.9cm}
	\\
	& \text{(e)} \
	\dfrac{\mathcal{X}^{m}_{r+1}(x)-\mathcal{X}^{m}_{r}(x)}
	{|\mathcal{X}^{m}_{r+1}(x)-\mathcal{X}^{m}_{r}(x)|}
	= \dfrac{\mathcal{X}^{m}_{r}(x)-\mathcal{X}^{m}_{r-1}(x)}
	{|\mathcal{X}^{m}_{r}(x)-\mathcal{X}^{m}_{r-1}(x)|}
	- 2 \dfrac{\mathcal{X}^{m}_{r}(x)-\mathcal{X}^{m}_{r-1}(x)}
	{|\mathcal{X}^{m}_{r}(x)-\mathcal{X}^{m}_{r-1}(x)|}
	\cdot \nu(\mathcal{X}^{m}_{r}(x)) \, \nu(\mathcal{X}^{m}_{r}(x))
\end{align*}
for $0 < r <m$. These conditions simply mean the phase $\phi^{m}_{\ell}(x)$ is determined
by the ray with initial direction $\alpha$ sequentially hitting at and bouncing off the points
$\mathcal{X}^{m}_{r}(x)$ ($r=0,\ldots,m-1$) according to the law of reflection to finally
arrive at $x \in \Gamma^m_{\ell}$. Moreover, these rays divide $\Gamma^m_{\ell}$ into two
open connected subsets, namely the \emph{illuminated regions}
\[
	\Gamma^{m}_{\ell}(IL)
	= \left\{
		\begin{array}{ll}
			\left\{
				x \in \Gamma_{\ell}^{0} :
				\alpha \cdot \nu(x) < 0
			\right\},
			& \quad m = 0,
			\\
			\\
			\left\{
				x \in \Gamma_{\ell}^{m} :
				(\mathcal{X}^{m}_{m}(x)-\mathcal{X}^{m}_{m-1}(x))
				\cdot \nu(x) < 0
			\right\},
			& \quad m \ge 1,
		\end{array}
	\right.
\]
and the \emph{shadow regions}
\[
	\Gamma^{m}_{\ell}(SR)
	= \left\{
		\begin{array}{ll}
			\left\{
				x \in \Gamma^{0}_{\ell} :
				\alpha \cdot \nu(x) > 0
			\right\},
			& \quad m = 0,
			\\
			\\
			\left\{
				x \in \Gamma^{m}_{\ell} :
				(\mathcal{X}^{m}_{m}(x)-\mathcal{X}^{m}_{m-1}(x))
				\cdot \nu(x) > 0
			\right\},
			& \quad m \ge 1,
		\end{array}
	\right.
\]
and their closures intersect at the \emph{shadow boundaries}
\[
	\Gamma^{m}_{\ell}(SB)
	= \left\{
		\begin{array}{ll}
			\left\{
				x \in \Gamma^{0}_{\ell} :
				\alpha \cdot \nu(x) = 0
			\right\},
			& \quad m = 0,
			\\
			\\
			\left\{
				x \in \Gamma^{m}_{\ell} :
				(\mathcal{X}^{m}_{m}(x)-\mathcal{X}^{m}_{m-1}(x))
				\cdot \nu(x) = 0
			\right\},
			& \quad m \ge 1,
		\end{array}
	\right.
\]
each of which consists of two points in two-dimensional configurations or a smooth
closed curve in three-dimensions. In connection with the phase functions
\eqref{eq:phases}, illuminated regions $\partial \Omega^{m}_{j}(IL)$,
shadow regions $\partial \Omega^{m}_{j}(SR)$, and the shadow
boundaries $\partial \Omega^{m}_{j}(SB)$ are then given by
\[
	\partial \Omega^{m}_{j}(\, \cdot \,) = 
	\left\{
		\begin{array}{ll}
			\Gamma^{m}_{j}(\, \cdot \,), & m \text{ is even}, \vspace{0.2cm}
			\\
			\Gamma^{m}_{j'}(\, \cdot \,), & m \text{ is odd}.
		\end{array}
	\right.
\]
Generally speaking this means that the rays emanating from $\partial \Omega^{m}_{j}$
return to $\partial \Omega^{m}_{j}$ after an even number of reflections, and those initiating
from $\partial \Omega^{m}_{j'}$ arrive $\partial \Omega^{m}_{j}$ after an odd number of
reflections. Finally let us note that the phase functions $\phi^{m}_{j}$ are smooth and
periodic as they are confined to the boundary of the associated scatterers. The computation
of these phases are performed using a spectrally accurate geometrical optics solver. This also allows for
a simple and accurate determination of the shadow boundary points and thus the illuminated
and shadow regions.

With these definitions we can now state one of the
main results in \cite{EcevitReitich09,AnandEtAl10} that completely clarifies the asymptotic  
behavior of amplitudes $\eta^{m, \, \rm{slow}}_{j}$ in \eqref{eq:factor}.

\begin{theorem}[\cite{EcevitReitich09,AnandEtAl10}]
\label{thm:hormander}
\begin{itemize}
\item[{\bf (i)}] On the illuminated region $\partial \Omega^{m}_{j}(IL)$,
$\eta^{m, \, \rm{slow}}_{j}(x, k)$ belongs to the H\"{o}rmander class
$S^{1}_{1,0}(\partial \Omega^{m}_{j}(IL) \times (0,\infty))$ (cf. \cite{Hormander66,Hormander71c})
and admits the asymptotic expansion
\begin{equation}
\label{eq:MT85anew}
	\eta^{m, \, \rm{slow}}_{j}(x, k)
	\sim
	\sum_{p \ge 0} k^{1-p} a^{m}_{j,p}(x)
\end{equation}
where $a^{m}_{j,p}$ are complex-valued $C^{\infty}$
functions. Consequently, for any $P \in \mathbb{N} \cup \{ 0 \}$, the difference
\begin{equation}
\label{eq:rmN}
	r^{m}_{j,P}(x,k) = \eta^{m, \, \rm{slow}}_{j}(x, k) - \sum_{p = 0}^{P} k^{1-p} a^{m}_{j,p}(x)
\end{equation}
belongs to $S^{-P}_{1,0}(\partial \Omega^{m}_{j}(IL) \times (0,\infty))$
and thus satisfies the estimates
\begin{equation}
\label{eq:ILestnew}
	\left| D_{x}^{\beta} D_{k}^{n} r^{m}_{j,N}(x,k) \right| \le C_{m,\beta,n,S} (1+k)^{-P-n}
\end{equation}
on any compact subset $S$ of $\partial \Omega^{m}_{j}(IL)$ for any multi-index $\beta$
and $n \in \mathbb{N} \cup \{ 0 \}$.

\item[{\bf (ii)}] Over the entire boundary $\partial \Omega_{j}$, $\eta^{m, \, \rm{slow}}_{j}(x, k)$
belongs to the H\"{o}rmander class $S^{1}_{2/3,1/3}(\partial \Omega_{j} \times (0,\infty))$
and admits the asymptotic expansion
\begin{equation}
\label{eq:MT85new}
	\eta^{m, \, \rm{slow}}_{j}(x, k)
	\sim
	\sum_{p,q \ge 0} k^{2/3-2p/3-q} \, b^{m}_{j,p,q}(x) \Psi^{(p)}(k^{1/3}Z^{m}_{j}(x))
\end{equation}
where $b^{m}_{j,p,q}(x)$ are complex-valued $C^{\infty}$ functions, $Z^{m}_{j}(x)$
is a real-valued $C^{\infty}$ function that is positive on $\partial \Omega^{m}_{j}(IL)$,
negative on $\partial \Omega^{m}_{j}(SR)$, and vanishes precisely to first order on
$\partial \Omega^{m}_{j}(SB)$, and the function $\Psi$ admits the asymptotic expansion
\begin{equation} \label{eq:PsiIL}
	\Psi(\tau) \sim \sum_{\ell \ge 0} c_{\ell} \tau^{1-3\ell}
	\qquad
	\text{as } \tau \to \infty,
\end{equation}
and it is rapidly decreasing in the sense of Schwartz as $\tau \to -\infty$.
Note specifically then, for any $P,Q \in \mathbb{N} \cup \{ 0 \}$, the difference
\begin{equation*}
	R^{m}_{P,Q}(x,k)
	= \eta^{m, \, \rm{slow}}_{j}(x, k)
	- \sum_{p,q = 0}^{P,Q} k^{2/3-2p/3-q} \, b^{m}_{j,p,q}(x) \Psi^{(p)}(k^{1/3}Z^{m}_{j}(x))
\end{equation*}
belongs to $S^{-\mu}_{2/3,1/3}(\partial \Omega_{j} \times (0,\infty))$,
$\mu = \min \left\{ 2P/3,Q \right\}$, and thus satisfies the estimates
\begin{equation}
\label{eq:estentirenew}
	\left| D_{x}^{\beta} D_{k}^{n} R^{m}_{P,Q}(x,k) \right| \le C_{m,\beta,n} (1+k)^{-\mu-2n/3+|\beta|/3}
\end{equation}
for any multi-index $\beta$ and $n \in \mathbb{N} \cup \{ 0 \}$.
\end{itemize}
\end{theorem}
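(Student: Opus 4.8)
The plan is to argue by induction on the reflection number $m$, propagating the claimed structure through the two operations that produce the iterates in \eqref{eq:etanorefl}--\eqref{eq:etarefl}: the \emph{interaction} $\mcS_{jj'}$, which transfers the current from $\partial \Omega_{j'}$ to $\partial \Omega_j$, and the \emph{single-obstacle solve} $(\mcI-\mcS_{jj})^{-1}$, which yields $\eta^{m}_j$ from that data. For the base case $m=0$, $\eta^{0}_j$ is the surface current induced by the plane wave $u^{\rm inc}$ on the single strictly convex obstacle $\Omega_j$; its membership in $S^{1}_{1,0}$ on the illuminated set and in $S^{1}_{2/3,1/3}$ globally, together with the expansions \eqref{eq:MT85anew} and \eqref{eq:MT85new} and the profile $\Psi$ satisfying \eqref{eq:PsiIL}, is the Melrose--Taylor description of diffraction by a convex body transcribed to the CFIE \eqref{eq:CFIE}, which is the starting point of the analyses in \cite{EcevitReitich09,AnandEtAl10}.

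For the inductive step, assume $\eta^{m-1}_{j'}=e^{ik\,\varphi^{m-1}_{j'}}\,\eta^{m-1,\,{\rm slow}}_{j'}$ has the asserted form. I would first analyze $v^{m}_j:=\mcS_{jj'}\eta^{m-1}_{j'}=2(\partial_{\nu}+ik)u^{m-1}_{j'}$ on $\partial \Omega_j$, an oscillatory single-layer integral over $\partial \Omega_{j'}$ whose amplitude is a symbol in the classes of the theorem. Splitting $\partial \Omega_{j'}$ into its illuminated part, a neighbourhood of its shadow boundary, and its shadow part, one applies the stationary phase lemma \cite{Fedoryuk71} on the lit part: for $x$ in the lit part of $\partial \Omega_j$, the total phase $\varphi^{m-1}_{j'}(y)+|x-y|$ has a single nondegenerate critical point in $y$, namely the geometric-optics reflection point, and this produces a classical symbol expansion carrying precisely the phase $\varphi^{m}_j$ prescribed by \eqref{eq:phases} and the reflection law, consistent with the $\mathcal{X}^{m}_{r}$ construction. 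As $x$ approaches $\partial \Omega^{m}_{j}(SB)$ this critical point coalesces with the shadow boundary of $\partial \Omega_{j'}$ and the ordinary estimate degenerates; a uniform Airy/Fock-type stationary-phase analysis in this glancing regime reproduces the $\Psi^{(p)}(k^{1/3}Z^{m}_{j}(x))$-modulated structure of \eqref{eq:MT85new}, while the shadow part of $\partial \Omega_{j'}$, where the density is Schwartz-small, feeds only the remainders. Hence $v^{m}_j \in S^{1}_{1,0}(\partial \Omega^{m}_{j}(IL))$ and $v^{m}_j \in S^{1}_{2/3,1/3}(\partial \Omega_j)$ with phase $\varphi^{m}_j$.

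It then remains to invert the single-obstacle operator: solving $(\mcI-\mcS_{jj})\eta^{m}_j=v^{m}_j$ is again a convex single-scattering problem, now with the oscillatory incidence $v^{m}_j$, and one constructs a parametrix for $(\mcI-\mcS_{jj})^{-1}$ within the Melrose--Taylor boundary calculus, using Theorem~\ref{thm:diagonal} to bound the residual operator on $L^2(\partial \Omega)$ uniformly for $k \ge k_0$. Since this calculus preserves $S^{1}_{1,0}$ on the lit set and $S^{1}_{2/3,1/3}$ globally and leaves the boundary phase unchanged, $\eta^{m}_j$ inherits the claimed structure, and the remainder bounds \eqref{eq:ILestnew} and \eqref{eq:estentirenew} are then the defining seminorm estimates for $S^{-P}_{1,0}$ and $S^{-\mu}_{2/3,1/3}$ applied to $r^{m}_{j,P}$ and $R^{m}_{P,Q}$. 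The main obstacle is the uniform treatment of the interaction integral across the lit region, shadow boundary, and shadow region of $\partial \Omega_{j'}$: exactly where $x$ lies near $\partial \Omega^{m}_{j}(SB)$ the elementary stationary-phase expansion fails, and one must work in the second-microlocal glancing calculus so that the degenerate critical point and the incoming Fock layer on $\partial \Omega_{j'}$ are resolved simultaneously; the same glancing analysis is required for the single-obstacle parametrix near the shadow boundary of $\partial \Omega_j$, and one must also check that the various constants, although allowed to depend on $m$, stay finite at each step of the induction.
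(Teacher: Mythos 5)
First, a point of calibration: the paper you were asked to match does not actually prove Theorem \ref{thm:hormander}; it is quoted, with attribution, from \cite{EcevitReitich09,AnandEtAl10}, so your sketch has to be measured against the proofs given in those references. At the structural level your outline is faithful to them: the argument there is indeed an induction on the reflection number $m$, the base case is the Melrose--Taylor-type description of the current induced on a single strictly convex obstacle by a plane wave (classical expansion on the lit region, Fock/Airy boundary-layer expansion with the profile $\Psi$ globally), the interaction step is an oscillatory integral treated by stationary phase \cite{Fedoryuk71} on the illuminated part of $\partial\Omega_{j'}$ with the shadow part feeding only remainders, and the genuinely hard point is the uniform analysis across the shadow boundaries, which you correctly single out.

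The caveats concern your inductive step. The proofs in \cite{EcevitReitich09,AnandEtAl10} do not proceed by building a parametrix for the boundary-integral operator $\left( \mcI - \mcS_{jj} \right)^{-1}$; they work with the exterior Dirichlet problems solved by the iterated fields $u^{m}_{j}$ and apply the single-convex-obstacle machinery to the map from (oscillatory, Fock-structured) Dirichlet data to the normal derivative of the total field, which is where membership in $S^{1}_{1,0}$ and $S^{1}_{2/3,1/3}$ and the expansions \eqref{eq:MT85anew} and \eqref{eq:MT85new} are propagated. Related to this, the role you assign to Theorem \ref{thm:diagonal} does not work as stated: the bound \eqref{eq:Dest} is an $L^{2}\left( \partial \Omega \right)$ operator estimate, uniform in $k$, and $L^{2}$ control of a parametrix residual cannot yield the pointwise, derivative-wise symbol estimates \eqref{eq:ILestnew} and \eqref{eq:estentirenew}; in the present paper that theorem is used only to transfer the rate of convergence from the right-hand sides $f^{m}$ to the iterates $\eta^{m}$, not in the asymptotic analysis. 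Finally, the sentence in which a ``uniform Airy/Fock-type stationary-phase analysis'' reproduces \eqref{eq:MT85new}, with all $x$- and $k$-derivatives controlled uniformly across $\partial \Omega^{m}_{j}(SB)$ at every step of the induction, compresses what is in fact the bulk of the technical content of the cited papers, so as written your proposal is a plausible roadmap rather than a proof.
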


The first main ingredient underlying the algorithm in \cite{BrunoEtAl05} was
the observation that, while $\eta^{m, \, \rm{slow}}_{j}$ admits a classical asymptotic
expansion in the illuminated region $\partial \Omega^{m}_{j}(IL)$ as displayed by
equation \eqref{eq:MT85anew}, it possesses boundary layers of order
$\mathcal{O}(k^{-1/3})$ around the shadow boundaries $\partial \Omega^{m}_{j}(SB)$
and rapidly decays in the shadow region $\partial \Omega^{m}_{j}(SR)$
as implied by the expansion \eqref{eq:MT85new} and the mentioned change in the asymptotic
expansions of the function $\Psi$.
Therefore, as depicted in \cite{BrunoEtAl05}, utilizing a cubic root change of variables
in $k$ around the shadow boundaries, the unknown $\eta^{m, \, \rm{slow}}_{j}$ can be
expresses in a number of degrees of freedom independent of frequency, and this
transforms the problem into the evaluation of highly oscillatory integrals.

Indeed, a second main element of the algorithm in \cite{BrunoEtAl05} is based on
the realization that the identity
\begin{equation} \label{eq:H10der}
	\dfrac{d}{dz} H^{(1)}_{0}(z) = - H^{(1)}_{1}(z)
\end{equation}
combined with the asymptotic expansions of Hankel functions \cite{AbramowitzStegun14}
entails
\[
	\left( \partial_{\nu(x)} +ik \right) G(x,y)
	\sim
	e^{ik \, |x-y|}
	\left( 
		e^{-i\pi/4}
		\left( \dfrac{k}{2\pi \, |x-y|} \right)^{1/2}
		\left( 1+ \dfrac{x-y}{|x-y|} \cdot \nu(x) \right)
	\right),
\]
and thus, in light of factorization \eqref{eq:factor}, equations \eqref{eq:etanorefl}-\eqref{eq:etarefl}
take on the form
\begin{equation} \label{eq:CFIEslowzero}
	e^{ik \, \varphi^{0}_{j}(x)} \ \eta^{0, \, {\rm slow}}_{j}(x) 
	- \int_{\partial \Omega_j}
	e^{ik \, (\varphi^{0}_{j}(y) + |x-y|)} \,
	F(x,y) \
	\eta^{0, \, {\rm slow}}_{j}(y)  \, ds(y)
	= f^0_j(x),
	\quad
	x \in \partial \Omega^{0}_{j},
\end{equation}
and, for $m \ge 1$,
\begin{multline} \label{eq:CFIEslowm}
	e^{ik \, \varphi^{m}_{j}(x)} \ \eta^{m, \, {\rm slow}}_{j}(x) 
	- \int_{\partial \Omega_j}
	e^{ik \, (\varphi^{m}_{j}(y) + |x-y|)} \,
	F(x,y) \,
	\eta^{m, \, {\rm slow}}_{j}(y)  \, ds(y)
	\\
	= \int_{\partial \Omega_{j'}}
	e^{ik \, (\varphi^{m-1}_{j}(y) + |x-y|)} \,
	F(x,y) \,
	\eta^{m-1, \, {\rm slow}}_{j'}(y)  \, ds(y),
	\quad
	x \in \partial \Omega^{m}_{j},
\end{multline}
where
\[
	F(x,y) = e^{-ik \, |x-y|} \left( \partial_{\nu(x)} + ik \right) G(x,y).
\]
As depicted in \cite{BrunoEtAl05}, frequency independent evaluations of integrals in
\eqref{eq:CFIEslowzero}-\eqref{eq:CFIEslowm} can then be accomplished to any desired accuracy
utilizing a \emph{localized integration} (around stationary points of the combined phase $\varphi^{m}_{j}(y) + |x-y|$ 
and/or the singularities of the integrand) procedure based upon suitable extensions of the method of
stationary phase.

The third main element of the algorithm in \cite{BrunoEtAl05} is the use of Nysr\"{o}m and trapezoidal
discretizations and Fourier interpolations to render the method high order, and the scheme is finally completed
with a matrix-free Krylov subspace linear algebra solver to obtain accelerated solutions.    

While the above discussion provides a brief summary of the algorithm in \cite{BrunoEtAl05}, it clearly
signifies the importance of retaining the phase information in connection with the multiple scattering
iterations since this allows for a simple utilization of the aforementioned localized integration scheme.
Accordingly, any strategy aiming at accelerating the convergence of Neumann series must also preserve
the phase information. As we explain, both the novel Krylov subspace method we develop in the next
section and its preconditioning discussed in section \ref{sec:Kirchhoff} posses this property.

\section{Novel Krylov subspace method for accelerating the convergence of Neumann series}
\label{sec:Krylov}

As with the solution of matrix equations, Krylov subspace methods provide a convenient mechanism for the approximate
solution of operator equations
\[
	\mcA \eta = g
\] 
in Hilbert spaces (see e.g. \cite{Saad03} and the references therein). These methods are \emph{orthogonal projection
methods} wherein, given an initial approximation $\mu^{(0)}$ to $\eta$, one seeks an approximate solution $\mu^{(m)}$ from the
affine space $\mu^{(0)} + K_m$ related with the \emph{Krylov subspace}
\[
	K_m
	= \operatorname{span} \, \{ r^{(0)}, \mcA r^{(0)}, \mcA^2r^{(0)}, \ldots, \mcA^{m-1} r^{(0)} \}
\]
of the operator $\mathcal{A}$ associated with the \emph{residual} $r^{(0)} = g - \mcA \mu^{(0)}$ imposing the
\emph{Petrov-Galerkin condition} 
\[
	g - \mcA \mu^{(m)} \perp K_m.
\]

In connection with the operator equation \eqref{eq:OE}, taking $\mu^{(0)} = 0$,
the approximate solution $\mu^{(m)}$ belongs to the Krylov subspace
\[
	K_m
	= \operatorname{span} \, \{ g, (\mcI-\mcT)g, (\mcI-\mcT)^2g, \ldots, (\mcI-\mcT)^{m-1} g \}
\]
for which, in light of identity \eqref{eq:etam}, the functions $\left( \mcI - \mcT \right)^n g$ can be expressed as
linear combinations of the multiple scattering iterations $\eta^{\ell}$ through use of the binomial theorem as
\begin{equation} \label{eq:binom}
	\left( \mcI - \mcT \right)^n g
	= \sum_{\ell=0}^{n} \binom{n}{\ell} \left( -1 \right)^{\ell} \mcT^{\ell} \, g
	= \sum_{\ell=0}^{n} \binom{n}{\ell} \left( -1 \right)^{\ell} \eta^{\ell}.
\end{equation}
This relation clearly entails
\[
	K_m
	= \operatorname{span} \, \{ \eta^{0},\ldots,\eta^{m-1} \}
\]
and thus, any information about the Krylov subspace $K_m$ can be obtained in frequency
independent computational times using the algorithm briefly described in \S4.

A particular Krylov subspace method we favor for the solution of multiple scattering problem \eqref{eq:OE}
is the classical ORTHODIR \cite{Saad03} iteration which, for the initial guess $\mu^{(0)} = 0$, takes on the form
\[
	\begin{array}{l}
		\text{1. Set } r^{(0)} = p^{(0)}= g, \vspace{0.1cm} \\
		\text{2. For } j = 0,1\dots \text{  DO}  \vspace{0.1cm} \\
		\qquad \text{2.1 } \ \alpha_j = \langle r^{(j)}, \mathcal{A} p^{(j)} \rangle / \langle \mathcal{A} p^{(j)}, \mathcal{A}p^{(j)} \rangle, \vspace{0.1cm} \\
		\qquad \text{2.2 } \ \mu^{(j+1)} = \mu^{(j)} + \alpha_j \, p^{(j)}, \vspace{0.1cm} \\
		\qquad \text{2.3 } \ r^{(j+1)} = r^{(j)} - \alpha_j \, \mathcal{A} p^{(j)}, \vspace{0.1cm} \\
		\qquad \text{2.4 } \ \text{For } i = 0,\dots,j, \
		\beta_{ij} = - \langle \mathcal{A}^2p^{(j)}, \mathcal{A}p^{(i)} \rangle / \langle \mathcal{A}p^{(i)}, \mathcal{A}p^{(i)} \rangle, \vspace{0.1cm} \\
		\qquad \text{2.5 } \ p^{(j+1)} = \mathcal{A}p^{(j)} + \sum_{i=0}^{j} \beta_{ij} \, p^{(i)}.
	\end{array}
\]
This iteration entails, through a straightforward induction argument, the following recurrence relation for 
$\mathcal{A} = \mcI - \mcT$ where $\mcT$ is the iteration operator specified by equation \eqref{eq:T}.
\begin{theorem} For $\mathcal{A} = \mcI - \mcT$, the iterates $p^{(j)}$ generated by the ORTHODIR algorithm satisfy the recurrence
relation
\begin{equation} \label{eq:pjbad}
	p^{(j)} = \left( \mcI - \mcT \right)^j p^{(0)} + \sum_{\ell=0}^{j-1} \sum_{i=0}^{\ell} \beta_{i\ell} \left( \mcI - \mcT \right)^{j-1-\ell} p^{(i)},
	\qquad
	j=0,1,\ldots.
\end{equation}
\end{theorem}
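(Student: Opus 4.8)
The plan is to establish \eqref{eq:pjbad} by induction on $j$, working directly from the update rule in step 2.5 of the ORTHODIR iteration, i.e.\ $p^{(j+1)} = \mcA p^{(j)} + \sum_{i=0}^{j} \beta_{ij} \, p^{(i)}$ with $\mcA = \mcI - \mcT$. The argument is purely algebraic: it uses only the linearity of $\mcT$ (so that $\mcA$ distributes over the sums and commutes with the powers $(\mcI-\mcT)^{n}$), and in particular it is insensitive to the concrete values of the scalars $\alpha_j$ and $\beta_{ij}$ generated in steps 2.1--2.4.

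For the base case $j=0$ the double sum $\sum_{\ell=0}^{-1}(\cdots)$ is empty and $(\mcI-\mcT)^{0}p^{(0)}=p^{(0)}$, so \eqref{eq:pjbad} reduces to the tautology $p^{(0)}=p^{(0)}$; one may also check $j=1$ against step 2.5 with $j=0$ as a sanity test for the index conventions. Assuming \eqref{eq:pjbad} at level $j$, I would apply $\mcA=\mcI-\mcT$ to both sides: by linearity this turns $(\mcI-\mcT)^{j}p^{(0)}$ into $(\mcI-\mcT)^{j+1}p^{(0)}$ and each summand $\beta_{i\ell}(\mcI-\mcT)^{j-1-\ell}p^{(i)}$ into $\beta_{i\ell}(\mcI-\mcT)^{j-\ell}p^{(i)}$, yielding
\[
	\mcA p^{(j)} = \left( \mcI - \mcT \right)^{j+1} p^{(0)} + \sum_{\ell=0}^{j-1} \sum_{i=0}^{\ell} \beta_{i\ell} \left( \mcI - \mcT \right)^{j-\ell} p^{(i)}.
\]
Adding the correction $\sum_{i=0}^{j}\beta_{ij}p^{(i)}$ prescribed by step 2.5 and noting that $(j+1)-1-j=0$, this correction is exactly the $\ell=j$ slice $\sum_{i=0}^{j}\beta_{ij}(\mcI-\mcT)^{(j+1)-1-j}p^{(i)}$ of the double sum appearing in \eqref{eq:pjbad} with $j$ replaced by $j+1$. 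Appending it to the terms $\ell=0,\dots,j-1$ completes the outer range to $\ell=0,\dots,j$, so $p^{(j+1)}$ has precisely the claimed form, which closes the induction.

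The only place that needs care — and the natural spot for an off-by-one slip — is this bookkeeping of the exponent under one application of $\mcA$: a single factor of $\mcI-\mcT$ turns the exponent $j-1-\ell$ at level $j$ into $(j+1)-1-\ell=j-\ell$ at level $j+1$ for the inherited terms $\ell\le j-1$, while the newly generated terms $\beta_{ij}p^{(i)}$ fill in exactly the missing $\ell=j$ contribution (of exponent $0$). Beyond this, the proof is a routine reindexing of finite sums, so no genuine obstacle is anticipated.
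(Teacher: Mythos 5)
Your induction is correct: the base case is trivial, applying $\mcA=\mcI-\mcT$ to the hypothesis raises each exponent by one, and the step-2.5 correction $\sum_{i=0}^{j}\beta_{ij}p^{(i)}$ supplies exactly the missing $\ell=j$ slice, which is precisely the ``straightforward induction argument'' the paper invokes (it gives no further detail). No discrepancy with the paper's approach.
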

Although this relation can be used in combination with the binomial identity \eqref{eq:binom} to recursively compute $p^{(j)}$, this
approach is bound to result in numerical instabilities when the distance $d$ between the obstacles $\Omega_1$ and $\Omega_2$
is close to zero since, in this case, the asymptotic rate of convergence $\mathcal{R}_{2}$ is close to $1$. Concentrating for
instance on the term $\left( \mcI - \mcT \right)^j p^{(0)}$, this instability is apparent from the subtractive cancellations in binomial
identity \eqref{eq:binom} upon noting that $p^{(0)} = g$ and $\eta^{\ell+2} \sim \mathcal{R}_{2} \eta^{\ell} \sim \eta^{\ell}$
for $\ell \gg \log k$ by inequality \eqref{eq:etageomrate} and Theorem \ref{thm:2per}.

On the other hand, since $p^{(0)} = g$, a combined use of \eqref{eq:binom} and \eqref{eq:pjbad} clearly shows that the iterates
$p^{(j)}$ generated by the ORTHODIR algorithms can alternatively be computed through the following \emph{identification procedure}.
\begin{corollary} 
Each $p^{(j)}$ is a linear combination of $\eta^{0},\ldots,\eta^{j}$, say
\begin{equation} \label{eq:pj}
	p^{(j)} = \sum_{i=0}^{j} \gamma_{ij} \, \eta^{i},
\end{equation}
this allows for the computation of the next iterate as
\begin{align} 
	p^{(j+1)}
	= \left( \mcI - \mcT \right) p^{(j)} + \sum_{i=0}^{j} \beta_{ij} \, p^{(i)}
	&
	= \sum_{i=0}^{j} \gamma_{ij} \, \eta^{i}
	-\sum_{i=0}^{j} \gamma_{ij} \, \eta^{i+1}
	+\sum_{i=0}^{j} \beta_{ij} \, p^{(i)} \nonumber
%	\\
%	& = \sum_{i=0}^{j} \gamma_{ij} \, \eta^{(i)}
%	-\sum_{i=1}^{j+1} \gamma_{i-1,j} \, \eta^{(i)}
%	+\sum_{i=0}^{j} \beta_{ij} \, p^{(i)} \\
%	& = \gamma_{0j} \, \eta^{(0)} -\gamma_{jj} \, \eta^{(j+1)}
%	+ \sum_{i=1}^{j} \left( \gamma_{ij} - \gamma_{i-1,j}  \right) \eta^{(i)}
%	+\sum_{i=0}^{j} \beta_{ij} \, p^{(i)} 
	\\
	& 
	= \sum_{i=0}^{j+1} \gamma_{i,j+1} \, \eta^{i} \label{eq:pjgood}
\end{align} 
where the new coefficients $\gamma_{i,j+1}$ are easily computed by identification.
\end{corollary}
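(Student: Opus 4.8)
The plan is to establish the representation \eqref{eq:pj} by induction on $j$, carrying along the update rule \eqref{eq:pjgood} simultaneously. The base case is immediate: since the algorithm initializes $p^{(0)} = g = \eta^{0}$, we may take $\gamma_{00} = 1$, so $p^{(0)}$ is trivially a linear combination of $\eta^{0}$. For the inductive step, suppose that for all $i \le j$ we have already produced coefficients $\gamma_{i'i}$ (with $i' \le i$) such that $p^{(i)} = \sum_{i'=0}^{i} \gamma_{i'i} \, \eta^{i'}$. The goal is to exhibit $p^{(j+1)}$ in the same form with one more term.

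The key computation is the chain of equalities in \eqref{eq:pjgood}. First I would substitute the ORTHODIR update from step 2.5 of the algorithm, namely $p^{(j+1)} = \mcA p^{(j)} + \sum_{i=0}^{j} \beta_{ij} \, p^{(i)}$ with $\mcA = \mcI - \mcT$; note this is exactly the $j=0,1,\ldots$ specialization already packaged in \eqref{eq:pjbad}, but it is cleaner to use the one-step form directly. Next, apply $\mcI - \mcT$ to the inductive expression $p^{(j)} = \sum_{i=0}^{j} \gamma_{ij} \eta^{i}$ term by term. The crucial observation—this is the heart of why the decomposition is \emph{stable} and phase-preserving—is that $\mcT \eta^{i} = \eta^{i+1}$ by the very definition \eqref{eq:etam} of the multiple scattering iterations, so $(\mcI - \mcT)\eta^{i} = \eta^{i} - \eta^{i+1}$ without any binomial expansion. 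Hence $(\mcI-\mcT) p^{(j)} = \sum_{i=0}^{j} \gamma_{ij} \eta^{i} - \sum_{i=0}^{j} \gamma_{ij} \eta^{i+1}$, and adding $\sum_{i=0}^{j} \beta_{ij} p^{(i)}$, where each $p^{(i)}$ is by the inductive hypothesis a combination of $\eta^{0},\ldots,\eta^{i} \subset \{\eta^0,\ldots,\eta^j\}$, produces a linear combination of $\eta^{0}, \ldots, \eta^{j+1}$. Collecting like powers of $\mcT$ applied to $g$—equivalently, grouping by the index of $\eta^{i}$—yields explicit formulas for $\gamma_{i,j+1}$: for instance $\gamma_{j+1,j+1} = -\gamma_{j,j+1}$... more precisely $\gamma_{j+1,j+1} = -\gamma_{jj}$, the top coefficient $\gamma_{jj}$ shifts up, the interior coefficients pick up the $-\gamma_{i-1,j}$ contribution together with the $\beta$-weighted sum of lower $\gamma$'s, and $\gamma_{0,j+1}$ inherits $\gamma_{0j}$ plus the $\beta$-corrections. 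Writing these out is routine bookkeeping and I would not belabor it.

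There is really no serious obstacle here; the statement is essentially a reorganization of the ORTHODIR recursion using the single algebraic identity $\mcT\eta^i = \eta^{i+1}$. The one point that warrants a sentence of care is to confirm that the formal manipulations are legitimate—that is, that all the operators involved ($\mcT$, and hence $\mcI - \mcT$) act on the span of the $\eta^i$ within $L^2(\partial\Omega)$ and that the $\eta^i$ are well-defined elements there, which follows from Theorem~\ref{thm:diagonal} guaranteeing $\mcD^{-1}$ is bounded so that each $\eta^m = \mcT^m g$ lies in $L^2(\partial\Omega)$. The contrast I would emphasize in the surrounding prose (though it is not part of the proof proper) is that \eqref{eq:pjgood} never forms the large alternating binomial sums $\sum_{\ell}\binom{n}{\ell}(-1)^\ell \eta^\ell$ of \eqref{eq:binom}: the coefficients $\gamma_{i,j+1}$ are built by a simple shift-and-add recursion from the previous $\gamma$'s and the ORTHODIR scalars $\alpha_j,\beta_{ij}$, so the catastrophic cancellation that plagues \eqref{eq:binom} when $|\mathcal{R}_2| \approx 1$ does not occur, and—because each $\eta^i$ retains its single phase $\varphi^i_j$ via the factorization \eqref{eq:factor}—every $p^{(j)}$ is a short, explicitly indexed superposition of single-phased functions, preserving the frequency-independent operation count of the algorithm in \cite{BrunoEtAl05}.
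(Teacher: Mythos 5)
Your argument is correct and follows essentially the same route as the paper: the one-step ORTHODIR recursion $p^{(j+1)} = (\mcI-\mcT)p^{(j)} + \sum_{i=0}^{j}\beta_{ij}\,p^{(i)}$ combined with $\mcT\eta^{i}=\eta^{i+1}$ from \eqref{eq:etam}, applied term by term to the inductive representation and followed by identification of coefficients, with the base case $p^{(0)}=g=\eta^{0}$. Your explicit induction framing (and the self-corrected bookkeeping $\gamma_{j+1,j+1}=-\gamma_{jj}$) merely spells out what the paper treats as immediate.
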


Note specifically that, since the phases of $\eta^i$ are known, identity \eqref{eq:pj} allows for a utilization
of the \emph{localized integration} scheme briefly summarized in \S4 in the evaluation of inner products
in steps 2.1 and 2.4 in the ORTHODIR iteration. On the other hand, the identification procedure
\eqref{eq:pjgood} provides a \emph{numerically stable} way of recursively computing $p^{(j)}$ as it clearly
eliminates subtractive cancellations arising from the use of binomial identity \eqref{eq:binom}.
%\begin{align*}
%	p^{(0)}
%	& = g = \eta^{(0)}
%	\\
%	p^{(1)}
%	& = \mathcal{A}p^{(0)} + \sum_{i=0}^{0} \beta_{i0} \, p^{(i)}
%	\\
%	p^{(2)}
%	& = \mathcal{A}p^{(1)} + \sum_{i=0}^{1} \beta_{i1} \, p^{(i)}
%	= \mathcal{A}^2p^{(0)} + \sum_{i=0}^{0} \beta_{i0} \, \mathcal{A} p^{(i)} + \sum_{i=0}^{1} \beta_{i1} \, p^{(i)}
%	\\
%	p^{(3)} & = \mathcal{A}p^{(2)} + \sum_{i=0}^{2} \beta_{i2} \, p^{(i)}
%	= \mathcal{A}^3p^{(0)} + \sum_{i=0}^{0} \beta_{i0} \, \mathcal{A}^2 p^{(i)} + \sum_{i=0}^{1} \beta_{i1} \, \mathcal{A} p^{(i)} + \sum_{i=0}^{2} \beta_{i2} \, p^{(i)}
%	\\
%	p^{(4)} & = \mathcal{A}p^{(3)} + \sum_{i=0}^{3} \beta_{i3} \, p^{(i)}
%	= \mathcal{A}^4p^{(0)} + \sum_{i=0}^{0} \beta_{i0} \, \mathcal{A}^3 p^{(i)} + \sum_{i=0}^{1} \beta_{i1} \, \mathcal{A}^2 p^{(i)}
%	+ \sum_{i=0}^{2} \beta_{i2} \, \mathcal{A} p^{(i)} + \sum_{i=0}^{3} \beta_{i3} \, p^{(i)}
%	\\
%	p^{(j+1)} & = \mathcal{A}p^{(j)} + \sum_{i=0}^{j} \beta_{ij} \, p^{(i)}
%\end{align*}

%%%%%%%%%%%%%%%%%%%%%%%%%%%%%%%%%%%%%%%%%%%%%%%%%
 
\section{Preconditioning using Kirchhoff approximations}
\label{sec:Kirchhoff}

Although the novel Krylov subspace approach discussed in the previous section provides an effective
mechanism for the accelerated solution of multiple scattering problem \eqref{eq:OE}, this can be 
further improved if the operator equation \eqref{eq:OE} is properly preconditioned. Indeed, for an
appropriately defined operator $\mcK$ % $\mcK: L^2 (\partial \Omega) \to L^2(\partial \Omega)$
approximating the iteration operator $\mcT$, the preconditioned form of equation \eqref{eq:OE} reads 
\begin{equation} \label{eq:precond}
	\left( \mcI - \mcK \right)^{-1} \left( \mcI - \mcT \right) \eta = \left( \mcI - \mcK \right)^{-1} g.
\end{equation}
In this connection, we note the following useful alternative.
\begin{theorem}  \label{thm:altpre}
If the spectral radius $r(\mcK)$ of $\mcK$ is strictly less than $1$, then the preconditioned equation
\eqref{eq:precond} can be written alternatively as
\begin{equation} \label{eq:altpre}
	\left( \mcI - \sum_{\ell=0}^{\infty} \mcK^{\ell} \left( \mcT - \mcK \right) \right) \eta
	= \sum_{\ell=0}^{\infty} \mcK^{\ell} g.
\end{equation}
\end{theorem}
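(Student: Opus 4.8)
The plan is to reduce the statement to the standard fact that, when $r(\mcK) < 1$, the Neumann series $\sum_{\ell \ge 0} \mcK^{\ell}$ converges in the operator norm to $(\mcI - \mcK)^{-1}$; once this is in hand, identity \eqref{eq:altpre} follows from \eqref{eq:precond} by a purely algebraic rearrangement. Throughout I take for granted that $\mcK$ is a bounded operator on $L^2(\partial \Omega)$ (as it is by construction, being an approximation of $\mcT$).

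First I would record the convergence of the Neumann series. By Gelfand's spectral radius formula, $r(\mcK) = \lim_{n \to \infty} \Vert \mcK^n \Vert_2^{1/n}$, so choosing $\rho$ with $r(\mcK) < \rho < 1$ there is an $N$ with $\Vert \mcK^n \Vert_2 \le \rho^n$ for all $n \ge N$; hence $\sum_{\ell \ge 0} \Vert \mcK^{\ell} \Vert_2 < \infty$. Since the bounded operators on $L^2(\partial \Omega)$ form a Banach algebra, the partial sums $S_M = \sum_{\ell=0}^{M} \mcK^{\ell}$ are Cauchy and converge in operator norm to a bounded operator $S$. The telescoping identities $(\mcI - \mcK) S_M = S_M (\mcI - \mcK) = \mcI - \mcK^{M+1}$, together with $\mcK^{M+1} \to 0$, then show that $\mcI - \mcK$ is invertible with $S = (\mcI - \mcK)^{-1}$, so \eqref{eq:precond} is meaningful.

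Next I would carry out the algebraic step. Writing $\mcI - \mcT = (\mcI - \mcK) - (\mcT - \mcK)$ and left-multiplying by $(\mcI - \mcK)^{-1}$ gives
\[
	(\mcI - \mcK)^{-1}(\mcI - \mcT)
	= \mcI - (\mcI - \mcK)^{-1}(\mcT - \mcK)
	= \mcI - \Bigl( \sum_{\ell=0}^{\infty} \mcK^{\ell} \Bigr)(\mcT - \mcK).
\]
Because $\mcT - \mcK$ is bounded and operator-norm convergence is preserved under right composition by a fixed bounded operator, $\bigl( \sum_{\ell} \mcK^{\ell} \bigr)(\mcT - \mcK) = \sum_{\ell=0}^{\infty} \mcK^{\ell}(\mcT - \mcK)$; in particular this series converges in operator norm and defines a bounded operator. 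Similarly, continuity of evaluation at a fixed vector yields $(\mcI - \mcK)^{-1} g = \bigl( \sum_{\ell} \mcK^{\ell} \bigr) g = \sum_{\ell=0}^{\infty} \mcK^{\ell} g$ with convergence in $L^2(\partial \Omega)$. Substituting these two expressions into \eqref{eq:precond} produces exactly \eqref{eq:altpre}.

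There is no substantial obstacle in this argument; the one point deserving care is that the hypothesis is $r(\mcK) < 1$ rather than the stronger $\Vert \mcK \Vert_2 < 1$, so that the convergence of $\sum_{\ell} \mcK^{\ell}$ must be extracted via Gelfand's formula (or, equivalently, from summability of $\Vert \mcK^n \Vert_2$), and that the two rearrangements of infinite series above are legitimate, which is immediate from the continuity of composition and of evaluation for bounded operators. One could alternatively bypass Gelfand's formula altogether by simply positing that $\mcI - \mcK$ is invertible — which is implicit in writing \eqref{eq:precond} — though stating the hypothesis in terms of the spectral radius is the natural way to also guarantee convergence of the series appearing on both sides of \eqref{eq:altpre}.
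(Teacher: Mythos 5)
Your proposal is correct and follows essentially the same route as the paper: the paper likewise invokes the Neumann series representation $(\mcI - \mcK)^{-1} = \sum_{\ell=0}^{\infty} \mcK^{\ell}$ (citing a standard reference where you supply the Gelfand-formula argument) and then applies the identity $(\mcI - \mcK)^{-1}(\mcI - \mcT) = \mcI - (\mcI - \mcK)^{-1}(\mcT - \mcK)$. Your additional care about operator-norm convergence and the legitimacy of the series rearrangements merely fills in details the paper leaves implicit.
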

\begin{proof}
Since $r(\mcK) < 1$, we have the Neumann series representation \cite{Lebedev97} %Lebedev, page 111, Theorem 3
\begin{equation} \label{eq:NeumannK}
	\left( \mcI - \mcK \right)^{-1}
	= \sum_{\ell=0}^{\infty} \mcK^{\ell}.
\end{equation}
Use of \eqref{eq:NeumannK} in the identity
\[
	\left( \mcI - \mcK \right)^{-1} \left( \mcI - \mcT \right)
	= \mcI - \left( \mcI - \mcK \right)^{-1} \left( \mcT - \mcK \right)
\]
delivers the desired result.
\end{proof}

It is therefore natural to approximate the solution of \eqref{eq:OE} with the solution of the truncated equation
\begin{equation} \label{eq:trpre}
	\left( \mcI - \sum_{\ell=0}^{N} \mcK^{\ell} \left( \mcT - \mcK \right) \right) \eta
	= \sum_{\ell=0}^{M} \mcK^{\ell} g
\end{equation}
which we shall write as
\begin{equation} \label{eq:trpreshort}
	\mcA_{_{\mcK,N}} \, \eta = g_{_{\mcK.M}}.
\end{equation}
While equation \eqref{eq:trpreshort} displays the preconditioning strategy we shall utilize for the solution of
multiple scattering problem \eqref{eq:OE}, it is clearly amenable to a treatment by the Krylov subspace method
developed in the preceding section to further accelerate the solution of problem \eqref{eq:OE}.

As for the requirement that $\mcK$ has to approximate the iteration operator $\mcT$, we recall that
each application of $\mcT$ corresponds exactly to the evaluation of the surface current on each of
the obstacles $\Omega_1$ and $\Omega_2$ generated by the fields scattered from, respectively, 
$\Omega_2$ and $\Omega_1$ at the previous reflection as depicted by the identity
\[
	\begin{bmatrix}
		\eta_{1}^{m} \vspace{0.2cm} \\
		\eta_{2}^{m}
	\end{bmatrix}
	= \mcT
	\begin{bmatrix}
		\eta_{1}^{m-1} \vspace{0.2cm} \\
		\eta_{2}^{m-1}
	\end{bmatrix}
	= \begin{bmatrix}
		0 & \left( \mcI - \mcS_{11} \right)^{-1} \mcS_{12} \vspace{0.2cm} \\
		\left( \mcI - \mcS_{22} \right)^{-1} \mcS_{21} & 0
	\end{bmatrix}
	\begin{bmatrix}
		\eta_{1}^{m-1} \vspace{0.2cm} \\
		\eta_{2}^{m-1}
	\end{bmatrix}.
\]
It is therefore reasonable to define the operator $\mcK$ in the form
\[
	\mcK
	= \begin{bmatrix}
		0 & \mcK_{12} \\
		\mcK_{21} & 0
	\end{bmatrix}
\]
and require that $ \eta^{m}_{j} \approx \mcK_{jj'} \, \eta^{m-1}_{j'}$. Accordingly, the operators $\mcK_{jj'}$
must retain the phase information to preserve the frequency independent operation count while,
concurrently, providing a reasonable approximation to the slow densities to guarantee an accurate
preconditioning. This requirement can be satisfied only if the operators $\mcK_{jj'}$ are defined in
a \emph{dynamical} manner so as to respect the information associated with the iterates, and this
distinguishes our preconditioning strategy from classical approaches where the preconditioners are 
\emph{steady} by design. The most natural approach is to design the operators $\mcK_{jj'}$ so that they yield
the classical \emph{Kirchhoff approximations} as these preserve the phase information exactly and
approximate $\eta^{m,\, {\rm slow}}_{j}$ with the leading term in its asymptotic expansion. Concentrating on
two-dimensional settings, in this connection, a basic relation we exploited in \cite{EcevitReitich09} was
the observation that while, on the one hand, this term coincides with that of twice the normal derivative
of $u^{m-1}_{j'}$ in \eqref{eq:umj} on the illuminated region $\partial \Omega^{m}_{j} (IL)$, and on the
other hand, identity \eqref{eq:H10der} combined with asymptotic expansions of Hankel functions
\cite{AbramowitzStegun14} entails
\begin{equation} \label{eq:MF}
	\partial_{\nu(x)} G(x,y)
	\sim
	e^{ik \, |x-y|}
	\left( 
		e^{-i\pi/4}
		\sqrt{\dfrac{k}{2\pi \, |x-y|}} \
		\dfrac{x-y}{|x-y|} \cdot \nu(x)
	\right)
\end{equation}
so that use of \eqref{eq:MF} in \eqref{eq:umj} yields 
\begin{equation} \label{eq:RHS}
	2 \, \partial_{\nu(x)} u^{m-1}_{j'}(x)
	\sim 
	\int_{\partial \Omega_{j'}}
	\sqrt{\dfrac{k}{2\pi}} \,
	e^{ik \, (\varphi^{m-1}_{j'}(y) + |x-y|)-i\pi/4} \,
	\eta^{m-1, \, {\rm slow}}_{j'}(y) \,
	F(x,y) \,
	ds(y),
	\quad x \in \partial \Omega_{j},
\end{equation}
where
\begin{equation} \label{eq:defineF}
	F(x,y)
	= \dfrac{1}{\sqrt{|x-y|}} \
	\dfrac{x-y}{|x-y|} \cdot \nu(x).
\end{equation}
As for the oscillatory integral in \eqref{eq:RHS}, as we have shown in \cite{EcevitReitich09},
it is treatable through an appropriate use of \emph{stationary phase method} \cite{Fedoryuk71} which
states that the main contribution to an oscillatory integral comes from the stationary points of the phase.
\begin{lemma}[Stationary phase method]
Let $\psi \in C^{\infty}[a,b]$ be real valued, and let $h \in C^{\infty}_{0}[a,b]$.
Suppose that $t_{0}$ is the only stationary point of $\psi$ in $(a,b)$, $\psi''(t_{0}) \ne 0$,
and $\sigma = \operatorname{sign} \psi''(t_{0})$. Then there exists a constant $C$ such that,
for all $k > 1$,
\[
	\left|
		\int_{a}^{b} e^{ik\psi(t)} \, h(t) \, dt
		- e^{ik\psi(t_{0})+i\pi\sigma/4} \,
		h(t_0) \,
		\sqrt{ \dfrac{2\pi}{k \, |\psi''{t_0}|}} \,
	\right|
	\le C \, k^{-1} \, \Vert h \Vert_{C^{2}[a,b]}.
\]
\end{lemma}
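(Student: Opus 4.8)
The plan is to prove this classical stationary-phase estimate by the standard three-step route: localize around $t_0$ with a smooth cutoff, dispose of the non-stationary part by one integration by parts, and reduce the stationary part to a Fresnel integral by a Morse-type change of variables, keeping careful track that all error constants depend on $h$ only through $\Vert h \Vert_{C^2[a,b]}$.

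First I would fix $\varepsilon>0$ small enough that $[t_0-\varepsilon,t_0+\varepsilon]\subset(a,b)$, that $\psi'$ vanishes on this interval only at $t_0$, and that $\psi''$ is bounded away from $0$ there, and choose $\chi\in C^\infty_0(t_0-\varepsilon,t_0+\varepsilon)$ with $\chi\equiv1$ near $t_0$. Splitting $h=\chi h+(1-\chi)h$ decomposes the integral as $I_1+I_2$. On the support of $(1-\chi)h$ the phase derivative $\psi'$ is nonvanishing, so writing $e^{ik\psi}=(ik\psi')^{-1}\frac{d}{dt}e^{ik\psi}$ and integrating by parts once — boundary terms vanish since $h$ is compactly supported in $(a,b)$, and in any case they are $O(k^{-1})$ — gives $|I_2|\le C k^{-1}\Vert h\Vert_{C^1}$ with $C$ depending only on $\psi$, $\chi$; since $\Vert h\Vert_{C^1}\le\Vert h\Vert_{C^2}$ this already lies within the allowed error.

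For $I_1$ I would invoke the one-dimensional Morse lemma: because $\psi''(t_0)\ne0$, the relation $\psi(t)-\psi(t_0)=\tfrac{\sigma}{2}s^2$, equivalently $s=\operatorname{sign}(t-t_0)\sqrt{2\sigma^{-1}(\psi(t)-\psi(t_0))}$, defines a $C^\infty$ diffeomorphism $t=\gamma(s)$ of a neighborhood of $0$ onto a neighborhood of $t_0$ with $\gamma(0)=t_0$ and $\gamma'(0)=|\psi''(t_0)|^{-1/2}$, depending on $\psi$ alone. After this substitution $I_1=e^{ik\psi(t_0)}\int e^{ik\sigma s^2/2}\,g(s)\,ds$ with $g(s)=h(\gamma(s))\,\chi(\gamma(s))\,\gamma'(s)\in C^\infty_0(\mathbb{R})$ and $g(0)=h(t_0)\,|\psi''(t_0)|^{-1/2}$. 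Choosing a fixed $\chi_0\in C^\infty_0(\mathbb{R})$ with $\chi_0\equiv1$ on $\operatorname{supp} g$, write $g=g(0)\chi_0+g_1$; then $g_1$ is smooth, compactly supported, and $g_1(0)=0$, so by Hadamard's lemma $g_1(s)=s\,g_2(s)$ with $g_2\in C^\infty_0$ and $\Vert g_2'\Vert_{L^1}\le C\Vert h\Vert_{C^2}$. One integration by parts gives $\int e^{ik\sigma s^2/2}g_1(s)\,ds=(ik\sigma)^{-1}\int \big(\tfrac{d}{ds}e^{ik\sigma s^2/2}\big)g_2(s)\,ds=O(k^{-1}\Vert h\Vert_{C^2})$. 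For the term $g(0)\int e^{ik\sigma s^2/2}\chi_0(s)\,ds$, I split $\chi_0=1-(1-\chi_0)$: the full-line Fresnel integral $\int_{\mathbb{R}}e^{ik\sigma s^2/2}\,ds=\sqrt{2\pi/k}\,e^{i\sigma\pi/4}$ produces the main term, while $\int_{\mathbb{R}}e^{ik\sigma s^2/2}(1-\chi_0(s))\,ds=O(k^{-1})$ by integrating by parts via $e^{ik\sigma s^2/2}=(ik\sigma s)^{-1}\frac{d}{ds}e^{ik\sigma s^2/2}$ on $\operatorname{supp}(1-\chi_0)$, which avoids $s=0$ and leaves an integrand decaying like $s^{-2}$. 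Collecting these pieces yields $I_1=e^{ik\psi(t_0)}\big[h(t_0)|\psi''(t_0)|^{-1/2}\sqrt{2\pi/k}\,e^{i\sigma\pi/4}+O(k^{-1}\Vert h\Vert_{C^2})\big]$, and adding $I_2$ gives exactly the stated inequality. The main obstacle I anticipate is not conceptual but bookkeeping: verifying that the constant depends on $h$ only through $\Vert h\Vert_{C^2[a,b]}$, which amounts to checking that the Morse substitution and the factorization $g_1(s)=s\,g_2(s)$ transfer at most two derivatives of $h$ into the $L^1$-bound on $g_2'$ — the smoothness and nonvanishing of $\gamma'$ near $0$, guaranteed precisely by $\psi''(t_0)\ne0$, is what makes this transfer clean. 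The justification of the Morse change of variables and the evaluation of the Fresnel integral are standard and I would merely cite them.
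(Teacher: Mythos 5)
The paper does not actually prove this lemma: it is stated as a quoted classical result, with the stationary phase method attributed to \cite{Fedoryuk71}, so there is no in-paper argument to compare against. Your proof is the standard derivation of that classical estimate --- smooth cutoff near $t_0$, one integration by parts on the non-stationary piece, the one-dimensional Morse change of variables reducing the stationary piece to a Fresnel integral, and a Hadamard factorization $g_1(s)=s\,g_2(s)$ plus one further integration by parts for the remainder --- and it is correct, including the bookkeeping that the constant depends on $h$ only through $\Vert h\Vert_{C^{2}[a,b]}$: two derivatives of $h$ enter only via $g_1''$ in the bound for $\Vert g_2'\Vert_{L^1}$, while the non-stationary piece and the truncated Fresnel tail cost at most $\Vert h\Vert_{C^1}$ and $\Vert h\Vert_{C^0}$ respectively. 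Two small points should be tightened, neither of which affects validity: (i) choose $\chi_0\equiv 1$ on a fixed neighborhood of $\{0\}\cup\operatorname{supp}g$, not merely on $\operatorname{supp}g$, since your integration by parts for $\int e^{ik\sigma s^{2}/2}\left(1-\chi_0(s)\right)ds$ requires $1-\chi_0$ to vanish near $s=0$, and $\operatorname{supp}g$ need not contain a neighborhood of $0$ if $h$ happens to vanish near $t_0$; (ii) state explicitly that $\varepsilon$ is taken small enough that the Morse substitution $t=\gamma(s)$ is a diffeomorphism on all of $[t_0-\varepsilon,t_0+\varepsilon]\supset\operatorname{supp}\chi$ --- your standing assumptions that $\psi'$ vanishes only at $t_0$ and that $\psi''$ stays away from zero on this interval do guarantee it, but the link should be made. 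With these clarifications the argument is complete and delivers exactly the stated bound, uniformly in $k>1$.
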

\noindent
Indeed, it turns out \cite{EcevitReitich09} that the combined phase function
\[
	\varphi^{m}_{jj'}(x,y) = \varphi^{m-1}_{j'}(y) + |x-y|
\]
has two stationary points, one in the shadow region $\partial \Omega^{m-1}_{j'}(SR)$ with a contribution of
$\mathcal{O}(k^{-n})$ (for all $n \in \mathbb{N}$) due to rapid decay of the amplitude $\eta^{m-1, \, {\rm slow}}_{j'}$,
and another one in the illuminated region $\partial \Omega^{m-1}_{j'}(IL)$ given by $y(x) = \mathcal{X}^{m}_{m-1}(x)$
(at which the combined phase has a positive ``second derivative'') whose contribution agrees, to leading order,
with that given by stationary phase evaluation of the integral in \eqref{eq:RHS}. While this discussion clarifies how
\emph{Kirchhoff operators} $\mathcal{K}_{jj'}$ must be designed so that they yield the leading terms in the asymptotic
expansions of $\eta^{m}_{j}$ on the illuminated regions $\partial \Omega^{m}_{j}(IL)$ at each iteration, the rapid decay
of $\eta^{m}_{j}$ in the shadow region $\partial \Omega^{m}_{j}(SR)$, in  turn, provides the motivation that $\mathcal{K}_{jj'}$
must simply approximate $\eta^{m}_{j}$ by zero in these regions. 
Being aware of these, we use $\gamma_j(t_j) = (\gamma_j^1(t_j),\gamma_j^2(t_j))$ to denote the arc length
parametrezation of $\partial \Omega_{j}$ (in the counterclockwise orientation) with period $L_j$ ($j=1,2$)
so that, for each $x_j \in \partial \Omega_j$, $t_j$ is the unique point in $[0,L_j)$ with $\gamma_j(t_j) = x_j$,
and define the \emph{Kirchhoff operators} $\mathcal{K}_{jj'}$ as follows.

\begin{definition}
For a smooth \emph{phase} $\phi_{j'} : \partial \Omega_{j'} \to \mathbb{R}$ having the property that,
for each $x_j \in \partial \Omega_{j}$, the function $\phi_{jj'}: \partial \Omega_{j} \times \partial \Omega_{j'} \to \mathbb{R}$
given by
\[
	\phi_{jj'}(x_j,x_{j'}) = \phi_{j'}(x_{j'}) + |x_j-x_{j'}|
\]
has a unique stationary point $y_{j'} = x_{j'}(x_j) \in \partial \Omega_{j'}$ such that $[x_{j},y_{j'}] \cap \partial \Omega_{j'} = y_{j'}$,
define the \emph{transformed phase} $\phi_{j} : \partial \Omega_j \to \mathbb{R}$ by setting
\[
	\phi_j (x_j) = \phi_{jj'}(x_j,y_{j'}).
\]
Assume further $\phi_{jj'} (t_j,t_{j'}) = \phi_{jj'}(x_j,x_{j'})$ has $\partial^2_{t_{j'}} \phi_{jj'} (t_j,\tau_{j'}) > 0$ 
at $\tau_{j'} = \gamma_{j'}^{-1}(y_{j'})$ and for a given \emph{amplitude} $A_{j'} : \partial \Omega_{j'} \to \mathbb{C}$,
define the \emph{transformed amplitude} $A_j : \partial \Omega_j \to \mathbb{C}$ by setting
\[
	A_j(x_j) =
	\left\{
		\begin{array}{cl}
			B_j(x_j), & \mbox{ if  } \ [x_j,y_{j'}] \cap \partial \Omega_{j} = \{ x_j \},
			\\
			\\
			0, & \mbox{ otherwise},
		\end{array}
	\right.
\]
where, with the function $F$ as defined in \eqref{eq:defineF},
\[
	B_j (x_j)
	= A_{j'}(y_{j'}) \, F(x_j,y_{j'}) \, ( \partial^2_{t_{j'}} \phi_{jj'} (t_j,\tau_{j'}))^{-1/2}.
\]
Finally, define the \emph{Kirchhoff operator} $\mathcal{K}_{jj'}$ by setting
\begin{equation} \label{eq:Kirchhoffactual}
	\mathcal{K}_{jj'}(\phi_{j'},A_{j'})
	= (\phi_{j},A_{j}).
\end{equation}
\end{definition}

We abbreviate identity \eqref{eq:Kirchhoffactual} as
\begin{equation} \label{eq:Kshort}
	\mathcal{K}_{jj'} (e^{ik\, \phi_{j'}} \, A_{j'} ) = e^{ik\, \phi_{j}} \, A_{j},
\end{equation}
and extend $\mathcal{K}_{jj'}$ by linearity so that
\begin{equation} \label{eq:Klinear}
	\mathcal{K}_{jj'} (\sum_{\ell=0}^{N} e^{ik\, \phi^{\ell}_{j'}} \, A^{\ell}_{j'} )
	= \sum_{\ell=0}^{N} \mathcal{K}_{jj'} (e^{ik\, \phi^{\ell}_{j'}} \, A^{\ell}_{j'}).
\end{equation}
%Note further that if $\mathcal{K}_{jj'} (e^{ik\, \phi^{1}_{j'}} \, A^{1}_{j'} ) = e^{ik\, \phi^{2}_{j}} \, A^{2}_{j}$ and
%$e^{ik\, \phi^{\ell}} \, A^{\ell} = [e^{ik\, \phi^{\ell}_{1}} \, A^{\ell}_{1}, \ e^{ik\, \phi^{\ell}_{2}} \, A^{\ell}_{2}]^{t}$
%($\ell =1,2$), in light of identity \eqref{eq:Kirchhoff}, we have
%\[
%	\mathcal{K} (e^{ik\, \phi^{1}} \, A^{1})
%	= e^{ik\, \phi^{2}} \, A^{2}.
%\]

In connection with the requirement that the operators $\mcK_{jj'}$ must retain the phase information
exactly while providing a reasonable approximation to the slow densities, we note that
\[
	\mathcal{K}_{jj'} (e^{ik\, \varphi^{m-1}_{j'}} \, \eta^{m-1, \, {\rm slow}}_{j'} )(x)
	= e^{ik \, \varphi^{m}_{j}(x)} \,
	\lambda^{m, \, {\rm slow}}_{j}(x),
	\quad
	x \in \partial \Omega_{j},
\]
where, with $F$ as given in \eqref{eq:defineF},
\[
	\lambda^{m, \, {\rm slow}}_{j}(x) =
	\left\{
		\begin{array}{cl}
			\eta^{m-1, \, {\rm slow}}_{j'}(\mathcal{X}^{m}_{m-1}(x)) \,
			F(x,\mathcal{X}^{m}_{m-1}(x)) \,
			( \partial^2_{t_{j'}} \varphi^{m}_{jj'} (t_j,\tau_{j'}))^{-1/2}, &
			x \in \partial \Omega^{m}_{j}(IL),
			\\
			\\
			0, &
			\text{ otherwise},
		\end{array}
	\right.
\]
so that $\mathcal{K}_{jj'}$ preserves the phase information, and the leading term in the asymptotic
expansion of $\lambda^{m, \, {\rm slow}}_{j}$ agrees with that of $\eta^{m, \, {\rm slow}}_{j}$ in the illuminated
region $\partial \Omega^{m}_{j}(IL)$ as desired (cf. \cite[Theorems 3.3 and 3.4]{EcevitReitich09}).

As for the alternative form \eqref{eq:altpre} of the preconditioned equation \eqref{eq:precond}, we have the
following result. 

\begin{theorem}
Suppose that the obstacles $\Omega_1$ and $\Omega_2$ satisfy the no-occlusion condition with
respect to the direction of incidence $\alpha$. Considering any given function $h \in C(\partial \Omega)$ as
$h(x) = e^{ik \, \alpha \cdot x} \, h_{0}(x)$, the series
\[
	\sum_{\ell=0}^{\infty} \Vert \mcK^{\ell} h \Vert_{\infty}
\]
converges for all $k >0$.
\end{theorem}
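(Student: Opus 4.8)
The plan is to track how the Kirchhoff operator $\mathcal{K}$ acts on terms of the form $e^{ik\,\phi}\,A$ and to extract a uniform-in-$k$ contraction factor in the $\Vert\cdot\Vert_\infty$ norm. Since $\mathcal{K}$ has the off-diagonal block structure $\mathcal{K} = \bigl(\begin{smallmatrix} 0 & \mathcal{K}_{12} \\ \mathcal{K}_{21} & 0 \end{smallmatrix}\bigr)$, the powers $\mathcal{K}^\ell$ alternate between the two obstacles, and $\mathcal{K}^\ell h$ is a sum of at most two terms, each again of the single-phased form $e^{ik\,\phi^{(\ell)}_j}\,A^{(\ell)}_j$ obtained by applying the composition $\mathcal{K}_{jj'}\mathcal{K}_{j'j}\cdots$ to the initial datum $e^{ik\,\alpha\cdot x}\,h_0$. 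The starting amplitude $h_0$ is continuous on the compact set $\partial\Omega$, hence bounded, and the starting phase $\alpha\cdot x$ is smooth; so the first step is simply to record that each iterate stays in the class \emph{(smooth phase with a nondegenerate stationary point, continuous amplitude)} on which $\mathcal{K}_{jj'}$ is defined, using the no-occlusion hypothesis to guarantee the requisite unique stationary point (this is exactly the geometry that produces the ray paths $\mathcal{X}^m_r$ and the phases $\phi^m_\ell$ described in \S\ref{sec:FreqIndep}).

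The heart of the argument is the pointwise bound on the amplitude transformation. By the definition of $B_j$, on the illuminated region one has
\[
	|A_j(x_j)| = |A_{j'}(y_{j'})|\cdot |F(x_j,y_{j'})|\cdot \bigl(\partial^2_{t_{j'}}\phi_{jj'}(t_j,\tau_{j'})\bigr)^{-1/2},
\]
and $A_j$ vanishes elsewhere; so $\Vert A_j\Vert_\infty \le \bigl(\sup |F|\bigr)\bigl(\inf \partial^2_{t_{j'}}\phi_{jj'}\bigr)^{-1/2}\,\Vert A_{j'}\Vert_\infty$. The plan is to show that for the specific sequence of phases generated along the two periodic orbits, the geometric factor
\[
	q_{jj'} := \sup_{x_j\in\partial\Omega^{m}_{j}(IL)} |F(x_j,y_{j'}(x_j))|\cdot \bigl(\partial^2_{t_{j'}}\phi_{jj'}(t_j,\tau_{j'})\bigr)^{-1/2}
\]
is bounded by a constant strictly less than $1$, uniformly over the iteration index; the product $q_{12}q_{21}$ is then the honest contraction constant per double step, and it should coincide (to leading order) with $|\mathcal{R}_2|$ from Theorem~\ref{thm:2per} — indeed the formula for $\mathcal{R}_2$ is precisely a product of a curvature/Jacobian factor of this type. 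I would compute this by relating $\partial^2_{t_{j'}}\phi_{jj'}$ to the \emph{curvature of the wavefront} arriving at $\partial\Omega_{j'}$: the phase second derivative governs the spreading of the reflected ray tube, and convexity of the obstacles forces the reflected wavefront to be more curved (tighter) than the incident one, which is what makes the factor $<1$. Concretely I would use the explicit second-derivative computations already carried out in \cite{EcevitReitich09} (the same ones underlying the $\mathcal{R}_2$ formula and Theorems~3.3--3.4 there) to get $q_{12}q_{21}\le |\mathcal{R}_2| + O(1/d)$-type control, or simply to get some fixed $\rho<1$; for the mere statement of convergence only $q_{12}q_{21}<1$ is needed, not the sharp value.

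Once the per-double-step contraction $\Vert A^{(\ell+2)}\Vert_\infty \le \rho\,\Vert A^{(\ell)}\Vert_\infty$ with a fixed $\rho=\rho(\Omega,\alpha)\in(0,1)$ is in hand, the rest is routine: $\Vert\mathcal{K}^\ell h\Vert_\infty = \Vert A^{(\ell)}\Vert_\infty$ (the complex exponential has modulus one), so $\Vert\mathcal{K}^\ell h\Vert_\infty \le C\,\rho^{\lfloor \ell/2\rfloor}\Vert h_0\Vert_\infty$, and the geometric series $\sum_\ell \rho^{\lfloor\ell/2\rfloor}$ converges; since none of the constants depend on $k$, convergence holds for all $k>0$, which also establishes that the spectral radius of $\mathcal{K}$ acting on this invariant class is bounded by $\rho^{1/2}<1$, the sense in which $r(\mathcal{K})<1$ promised in the introduction. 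The main obstacle is the uniformity claim for $\rho$: one must verify that as the ray bounces back and forth, the relevant stationary-point second derivatives stay bounded \emph{below} away from zero uniformly in the bounce count $m$ — i.e. the reflected wavefronts do not flatten out along the orbit — and that the support condition $[x_j,y_{j'}]\cap\partial\Omega_{j}=\{x_j\}$ (no self-occlusion of the ray by the \emph{receiving} obstacle) persists; both follow from strict convexity plus no-occlusion, but assembling them into a single $k$-independent bound, presumably by invoking the wavefront-curvature recursion from \cite{EcevitReitich09}, is where the real work lies.
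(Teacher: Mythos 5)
There is a genuine gap, and it sits exactly where you locate "the real work." Your reduction is to show that the product of the two separate suprema $q_{12}q_{21}$, with $q_{jj'}=\sup_{x_j}|F(x_j,y_{j'}(x_j))|\,(\partial^2_{t_{j'}}\phi_{jj'})^{-1/2}$, is a fixed $\rho<1$ uniformly in the bounce index. This is not only unproved in your sketch; as stated it is false in general. At a stationary point $y_{j'}$ where the reflected ray leaves $\partial\Omega_{j'}$ near grazing (angle $\theta$ from the normal) and arrives at $x_j$ near normally, the one-step factor behaves like $\cos\psi_{x_j}/\sqrt{\cos^2\theta+2\kappa s\cos\theta+\cdots}\sim 1/\cos\theta$, which is large; take for instance a very large-radius (nearly flat, still strictly convex) obstacle close to a small one and the no-occlusion condition still holds, yet $q_{jj'}\gg 1$ while the opposite step's supremum is an $O(1)$ number, so $q_{12}q_{21}>1$. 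In the actual iteration this blow-up is harmless because the \emph{incoming amplitude} of the iterate is itself small (proportional to $\cos\theta$-type obliquity) precisely at such near-grazing source points; a product-of-sups bound discards exactly this cancellation between where the geometric factor is large and where the transported amplitude is small. Relatedly, identifying the contraction constant with $|\mathcal{R}_2|$ conflates the value of the composed two-step factor \emph{at the periodic orbit points} $a_1,a_2$ with its supremum over the illuminated regions; $|\mathcal{R}_2|<1$ is an asymptotic statement along the orbit, not a uniform per-double-step operator bound, and for early iterates (plane-wave phase, wavefront curvatures not yet near the orbit's fixed point) no such uniform bound is available without further argument.

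The paper's proof takes a different, and essentially unavoidable, route: it does not establish any uniform one- or two-step contraction. Instead it imports the technique of \cite[Theorem 4.1]{EcevitReitich09} (the wavefront/amplitude recursion along rays, i.e. the very machinery you defer to) to obtain the asymptotic two-step comparison $\Vert \mcK^{\ell+2}h-\mcR_2\,\mcK^{\ell}h\Vert_{\infty}\le C\,\delta^{\ell}\,\Vert h\Vert_{\infty}$ with $\delta=\delta(\Omega,\alpha)\in(0,1)$: the composed double step acts, up to a geometrically small error in $\ell$, like multiplication by the constant $\mcR_2$ of Theorem \ref{thm:2per}. Then, after enlarging $\delta$ so that $\delta^2\in(|\mcR_2|,1)$, a telescoping sum over $j$ of $\mcR_2^{\ell-(j+1)}\mcK^{m+2(j+1)}h-\mcR_2^{\ell-j}\mcK^{m+2j}h$ gives $\Vert\mcK^{\ell}h\Vert_{\infty}\le C\,\delta^{\ell}\,\Vert h\Vert_{\infty}$, and the series converges. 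Your closing step (geometric bound implies convergence, uniformly in $k$, and yields $r(\mcK)\le\delta<1$ on this class) matches the paper's remark, but the decisive ingredient is the two-step estimate with the constant $\mcR_2$ plus geometrically decaying error, not a uniform pointwise contraction of the amplitude map; to repair your argument you would have to track the composed factor and the amplitude smallness jointly along ray paths, which is precisely redoing the analysis of \cite{EcevitReitich09} that the paper invokes.
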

\begin{proof}
The same technique used to prove \cite[Theorem 4.1]{EcevitReitich09} entails the existence of constants $C = C(\Omega,\alpha) >0$
and $\delta = \delta(\Omega,\alpha) \in (0,1)$ such that, for all $\ell \in \mathbb{Z}_+$,
\[
	\Vert \mcK^{\ell+2} h - \mcR_2 \mcK^{\ell} h \Vert_{\infty}
	\le \delta^{\ell} \left( \min \left\{ 2, \exp \left( Ck \, \delta^{\ell} \right)-1 \right\} \delta^2 + C \delta^{\ell-2} \right) \Vert h \Vert_{\infty}
\]
which yields
\begin{align*}
	\Vert \mcK^{\ell+2} h - \mcR_2 \mcK^{\ell} h \Vert_{\infty}
	& \le \delta^{\ell} \left( 2 \delta^{2} + C \delta^{\ell-2} \right) \Vert h \Vert_{\infty}.
\end{align*}
Using $C = C(\Omega,\alpha)$ to denote a positive constant whose value may be different at each appearance in what follows,
this inequality clearly implies
\[
	\Vert \mcK^{\ell+2} h - \mcR_2 \mcK^{\ell} h \Vert_{\infty}
	\le C \, \delta^{\ell} \Vert h \Vert_{\infty}.
\]
Since $|\mcR_2| < 1$, choosing $\delta$ larger, if necessary, we may assume that $\delta^2 \in (|\mcR_2|,1)$. 
In this case, the preceding inequality yields, for $\ell \in \mathbb{Z}_{+}$ and $m=0,1$,
\begin{align*}
	\Vert \mcK^{m+2\ell} h \Vert_{\infty}
	& \le \Vert \mcR_2^{\ell} \, \mcK^{m} h \Vert_{\infty}
	+\sum_{j=0}^{\ell-1} \Vert \mcR_2^{\ell-(j+1)} \mcK^{m+2(j+1)} h - \mcR_2^{\ell-j}\mcK^{m+2j} h \Vert_{\infty}
	\\
	& = |\mcR_2|^{\ell} \, \Vert \mcK^{m} h \Vert_{\infty}
	+\sum_{j=0}^{\ell-1} |\mcR_2|^{\ell-(j+1)} \Vert \mcK^{m+2(j+1)} h - \mcR_2 \mcK^{m+2j} h \Vert_{\infty}
	\\
	& \le |\mcR_2|^{\ell} \, \Vert \mcK^{m} h \Vert_{\infty}
	+ C \sum_{j=0}^{\ell-1} |\mcR_2|^{\ell-(j+1)} \delta^{m+2j} \Vert h \Vert_{\infty}
	\\
	& = |\mcR_2|^{\ell} \, \Vert \mcK^{m} h \Vert_{\infty}
	+C \, \delta^{m} \dfrac{|\mcR_2|^{\ell}-\delta^{2\ell}}{|\mcR_2|- \delta^{2}} \Vert h \Vert_{\infty}
	\\
	& \le \delta^{2\ell} \, \Vert \mcK^{m} h \Vert_{\infty}
	+C \, \delta^{m+2\ell} \Vert h \Vert_{\infty}.
\end{align*}
Since, we clearly have $\Vert \mcK^{m} h \Vert_{\infty} \le C \Vert h \Vert_{\infty}$ for $m=0,1$, we conclude
\[
	\Vert \mcK^{m+2\ell} h \Vert_{\infty}
	\le C \left( \delta^{2\ell} + \delta^{m+2\ell} \right)\Vert h \Vert_{\infty}
	\le C \, \delta^{m+2\ell} \, \Vert h \Vert_{\infty},
\]
and this gives, for all $\ell \in \mathbb{Z}_+$,
\begin{equation} \label{eq:estKl}
	\Vert \mcK^{\ell} h \Vert_{\infty}
	\le  C \, \delta^{\ell} \, \Vert h \Vert_{\infty}. 
\end{equation}
Thus the result.
\end{proof}

\begin{remark}
Considering $\mcK$ as an operator $\mcK : C(\partial \Omega) \to C(\partial \Omega)$, inequality
\eqref{eq:estKl} implies
\[
	r (\mcK)
	= \lim_{\ell \to \infty} \Vert \mcK^{\ell} \Vert_{\infty}^{1/\ell}
	\le \delta <1
\]
for the spectral radius of $\mcK$, and this explains the sense in which identity \eqref{eq:altpre}
in Theorem \ref{thm:altpre} holds.
\end{remark}

In connection with the application of the ORTHODIR iteration to the preconditioned equation \eqref{eq:trpre},
setting $\varphi^{m} = [\varphi^{m}_1, \varphi^{m}_2]^{t}$ and using $\mu^{\ell}$ ($\ell = 0, 1, \ldots$) to denote
generic functions defined on $\partial \Omega$ which may be different from line to line, we thus see through
equations \eqref{eq:Kshort}-\eqref{eq:Klinear} that
\[
	p^{(0)} = g_{_{\mcK,M}}
	= \sum_{\ell=0}^{N} \mathcal{K}^{\ell}g
	= \sum_{\ell=0}^{M} \mathcal{K}^{\ell} (e^{ik \, \varphi^{0}} \eta^{0, \, {\rm slow}})
\]
is of the form
\begin{equation} \label{eq:p0}
	p^{(0)} = \sum_{\ell=0}^{M} e^{ik \, \varphi^{\ell}} \mu^{\ell}.
\end{equation}
More generally, we have the following result.

\begin{theorem}
For $j=0,1,2\ldots$, the ORTHODIR iterates $p^{(j)}$ are of the form
\begin{equation} \label{eq:pj0}
	p^{(j)} = \sum_{\ell=0}^{M+j(N+1)} e^{ik \, \varphi^{\ell}} \mu^{\ell}.
\end{equation}
\end{theorem}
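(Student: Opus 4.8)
The plan is to proceed by induction on $j$, with the base case $j=0$ being exactly the representation \eqref{eq:p0} derived just above (note $M+0\cdot(N+1)=M$). So assume \eqref{eq:pj0} holds for every index up to some $j$. Step~2.5 of the ORTHODIR iteration gives $p^{(j+1)} = \mcA_{\mcK,N}\,p^{(j)} + \sum_{i=0}^{j}\beta_{ij}\,p^{(i)}$, the $\beta_{ij}$ are scalars, and by the induction hypothesis each $p^{(i)}$ with $i\le j$ is a finite sum of terms $e^{ik\varphi^{\ell}}\mu^{\ell}$ with $\ell\le M+j(N+1)\le M+(j+1)(N+1)$; hence the whole matter reduces to showing that $\mcA_{\mcK,N}\,p^{(j)}$ is of the form $\sum_{\ell=0}^{M+(j+1)(N+1)}e^{ik\varphi^{\ell}}\mu^{\ell}$.

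The structural fact I would isolate first is that both $\mcT$ and $\mcK$ act as \emph{phase-index shift} operators on the family of geometrical phases $\varphi^{\ell}$ of \eqref{eq:phases}: applied to a function $e^{ik\varphi^{\ell}}\mu$ with an arbitrary slow amplitude $\mu$, each returns a function of the form $e^{ik\varphi^{\ell+1}}\tilde\mu$. For $\mcK$ this is built into its definition together with \eqref{eq:Kshort}--\eqref{eq:Klinear}, once one observes that the transformed phase of $\varphi^{\ell}$ is precisely $\varphi^{\ell+1}$ — this is the displayed identity for $\mcK_{jj'}(e^{ik\varphi^{m-1}_{j'}}\eta^{m-1,\,{\rm slow}}_{j'})$ following \eqref{eq:Klinear} combined with the description \eqref{eq:phases}, and it is valid with any amplitude in place of $\eta^{m-1,\,{\rm slow}}_{j'}$. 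For $\mcT$ it is the phase-tracking property underpinning \S\ref{sec:FreqIndep}: as \eqref{eq:CFIEslowm} shows, one application of $\mcT$ amounts to solving a second-kind equation whose solution is, by construction, carried by the prescribed phase, so $\mcT(e^{ik\varphi^{\ell}}\mu)$ is again of the form $e^{ik\varphi^{\ell+1}}\tilde\mu$. Iterating and using linearity \eqref{eq:Klinear}, $\mcK^{n}$ shifts the phase index by $n$ and $\mcK^{n}\mcT$ shifts it by $n+1$.

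Granting this, I would simply count. Expanding $\mcA_{\mcK,N} = \mcI - \sum_{n=0}^{N}\mcK^{n}\mcT + \sum_{n=0}^{N}\mcK^{n+1}$ (the expansion of $\mcI-\sum_{n=0}^{N}\mcK^{n}(\mcT-\mcK)$ from \eqref{eq:trpre}) and applying it to $p^{(j)} = \sum_{\ell=0}^{M+j(N+1)}e^{ik\varphi^{\ell}}\mu^{\ell}$: the term $\mcI$ preserves the phases $\varphi^{0},\ldots,\varphi^{M+j(N+1)}$; each $\mcK^{n}\mcT$ with $0\le n\le N$ produces phases $\varphi^{n+1},\ldots,\varphi^{M+j(N+1)+n+1}$; and each $\mcK^{n+1}$ with $0\le n\le N$ likewise produces $\varphi^{n+1},\ldots,\varphi^{M+j(N+1)+n+1}$. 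The smallest index occurring is $0$ and the largest is $M+j(N+1)+N+1 = M+(j+1)(N+1)$, so after collecting terms with a common phase one gets $\mcA_{\mcK,N}\,p^{(j)} = \sum_{\ell=0}^{M+(j+1)(N+1)}e^{ik\varphi^{\ell}}\mu^{\ell}$; adding $\sum_{i\le j}\beta_{ij}\,p^{(i)}$ cannot enlarge this range, and the induction closes.

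The main obstacle is the phase-shift claim for $\mcT$, and in particular asserting it for a \emph{generic} single-phased input rather than just for the genuine multiple-scattering iterate: one needs that the combined phase $\varphi^{\ell}_{j'}(y)+|x-y|$ possesses a critical point and critical value determined by the phase alone and not by the amplitude, so that the phase carried by $\mcT(e^{ik\varphi^{\ell}}\mu)$ is unambiguously $\varphi^{\ell+1}$; the convexity and no-occlusion hypotheses underlying the geometrical-optics construction of \S\ref{sec:FreqIndep} are exactly what guarantees this. A last, trivial point to record is that no phase index can drop below $0$, since $\mcI$, $\mcT$ and $\mcK$ only preserve or increase the index and the recursion starts from $\varphi^{0}$.
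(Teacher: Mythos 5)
Your proposal is correct and follows essentially the same route as the paper: the paper's proof is precisely the ``straightforward induction'' you carry out, based on the base case \eqref{eq:p0}, the recursion $p^{(j+1)} = \mcA_{_{\mcK,N}} p^{(j)} + \sum_{i=0}^{j}\beta_{ij}\,p^{(i)}$, and the phase-shifting action of $\mcK$ (via \eqref{eq:Kshort}--\eqref{eq:Klinear}) and of $\mcT$, which the paper leaves implicit from \S\ref{sec:FreqIndep}. Your explicit index count $M+j(N+1)+N+1=M+(j+1)(N+1)$ and your remark on why $\mcT$ maps a term carried by $\varphi^{\ell}$ to one carried by $\varphi^{\ell+1}$ simply spell out details the paper omits.
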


\begin{proof}
This follows by a straightforward induction based on equations \eqref{eq:Kshort}-\eqref{eq:Klinear}, \eqref{eq:p0}
and the recursion
\begin{equation} \label{eq:pj1}
	p^{(j+1)}
	= \mcA_{_{\mcK,N}} p^{(j)} + \sum_{i=0}^{j} \beta_{ij} \, p^{(i)}
	= \left( \mcI - \sum_{\ell=0}^{N} \mcK^{\ell} \left( \mcT - \mcK \right) \right) p^{(j)} + \sum_{i=0}^{j} \beta_{ij} \, p^{(i)}.
\end{equation}
\end{proof}

The main point behind this theorem is that use of \eqref{eq:pj0} in \eqref{eq:pj1} clearly allows for an application
of the aforementioned localized integration scheme in connection with the execution of the operator $\mcT$ in \eqref{eq:pj1}.
Moreover, it is clear that each realization of the Kirchhoff operator $\mcK$ is frequency
independent. Consequently, the preconditioned equation \eqref{eq:trpre} is amenable to a treatment by the Krylov
subspace method described in \S5 to obtain even more accelerated solutions of the multiple scattering problem
\eqref{eq:OE} while still retaining the frequency independent operation count if desired.

%%%%%%%%%%%%%%%%%%%%%%%%%%%%%%%%%%%%%%%%%%%%%%%%%

\section{Numerical implementations}
\label{sec:Numerical}

Here we present numerical examples that display the
benefits of our Krylov subspace approach as well as its
preconditioning through use of Kirchhoff approximations.
To this end, we have designed two different test configurations
(see Fig.~\ref{fig:configurations}).
First we have considered two circles illuminated by a plane-wave
incidence coming in from the left with wavenumber $k=200$.
While the radii of the circles are $1$ and $1.5$, they are
centered at the origin and $(0.9625,-2.6444)$ respectively.
Second we have treated a configuration consisting of two
parallel elliptical obstacles with centers at $(0,0)$ and
$(0,-4.5)$, and major/minor axes $10/1$ and $7/2$. The
illumination is provided by a plane wave with direction
along the major axes and wavenumber $k = 40$.

\begin{figure}
\begin{center}
\hbox{
	\hspace{1.6cm}
	\subfigure[Circles]
		{\includegraphics*[height=1.8in,viewport=190 110 650 490,clip] {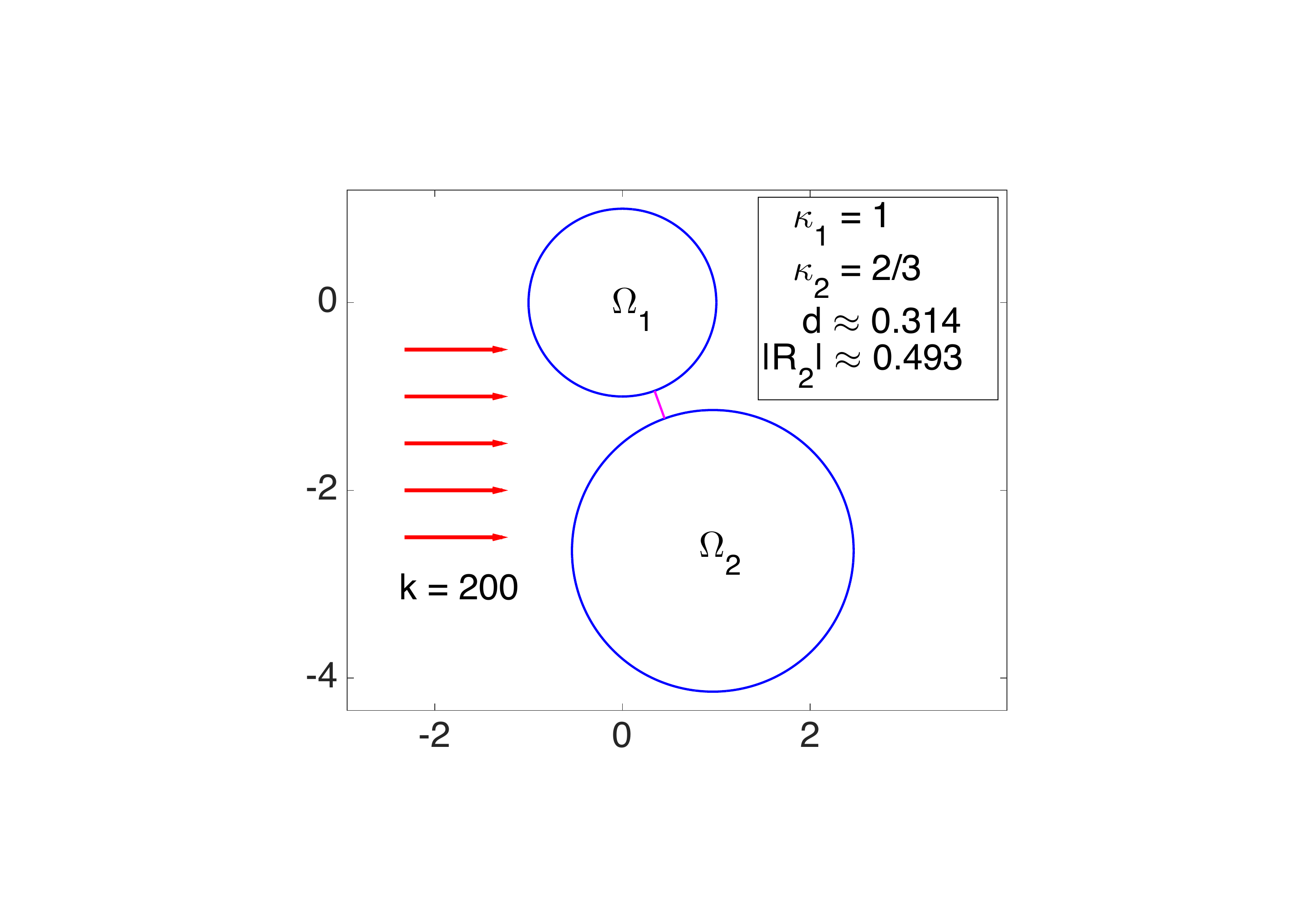}}
	\hspace{1.8cm}		
	\subfigure[Ellipses]
	{\includegraphics*[height=1.8in,viewport=190 110 650 490,clip] {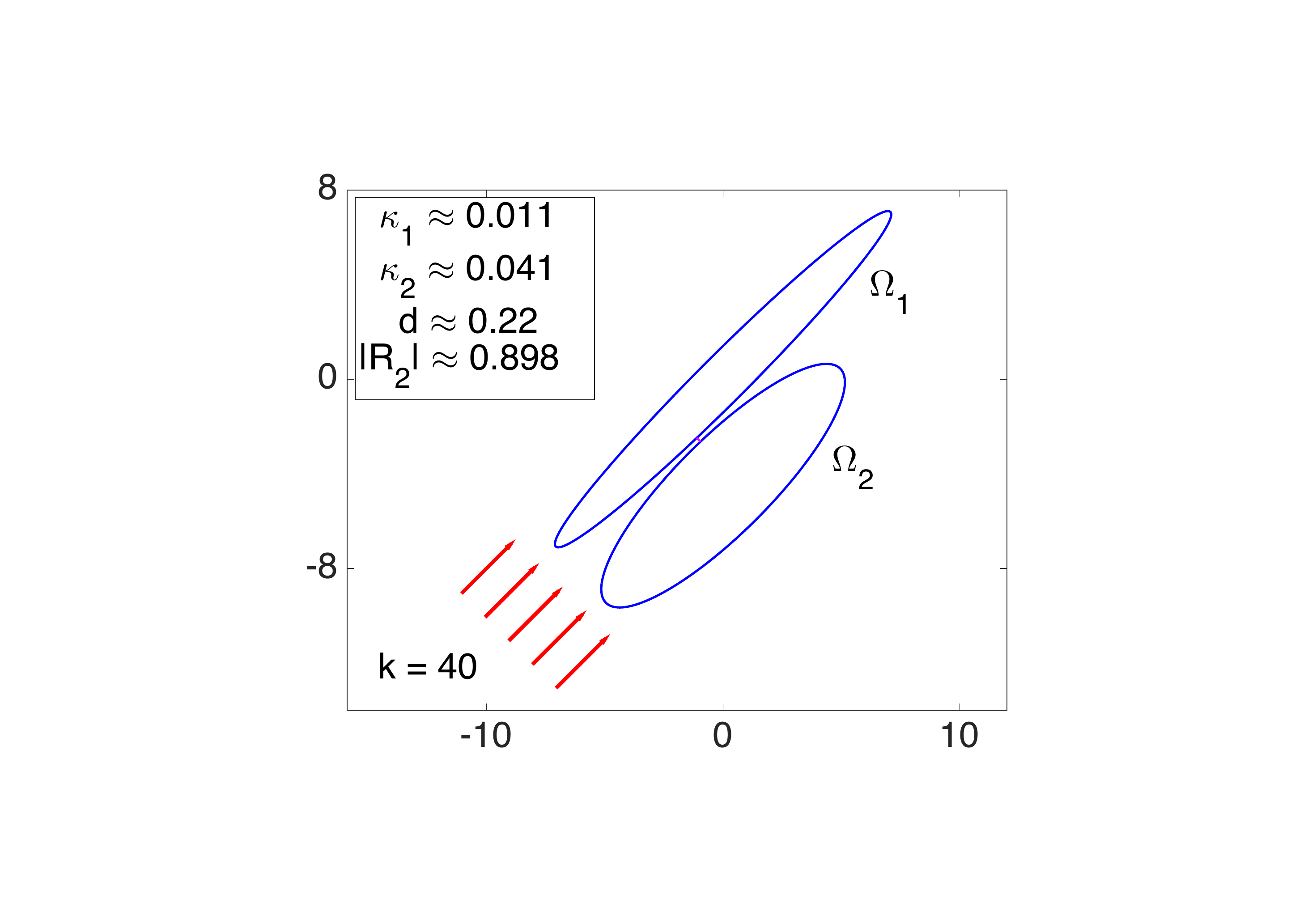}}
}
\end{center}\vspace{0cm}
\caption{Multiple scattering configurations.}
\label{fig:configurations}
\end{figure}

\begin{figure}[pb]
\begin{center}
\hbox{
	\subfigure[Circles]
		{\includegraphics*[width=3.0in,viewport=30 90 770 490,clip] {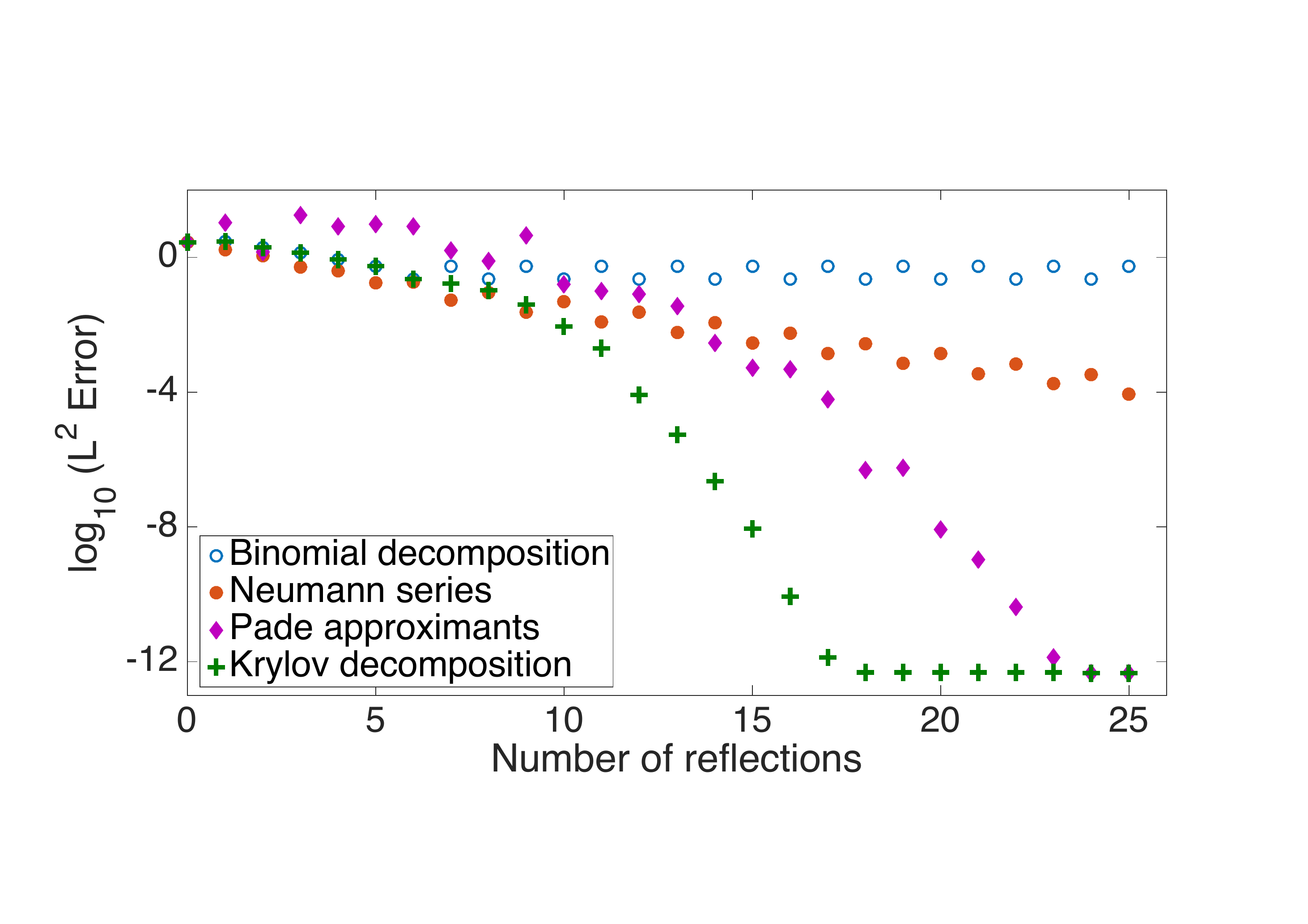}}
	\subfigure[Ellipses]
		{\includegraphics*[width=3.0in,viewport=30 90 770 490,clip] {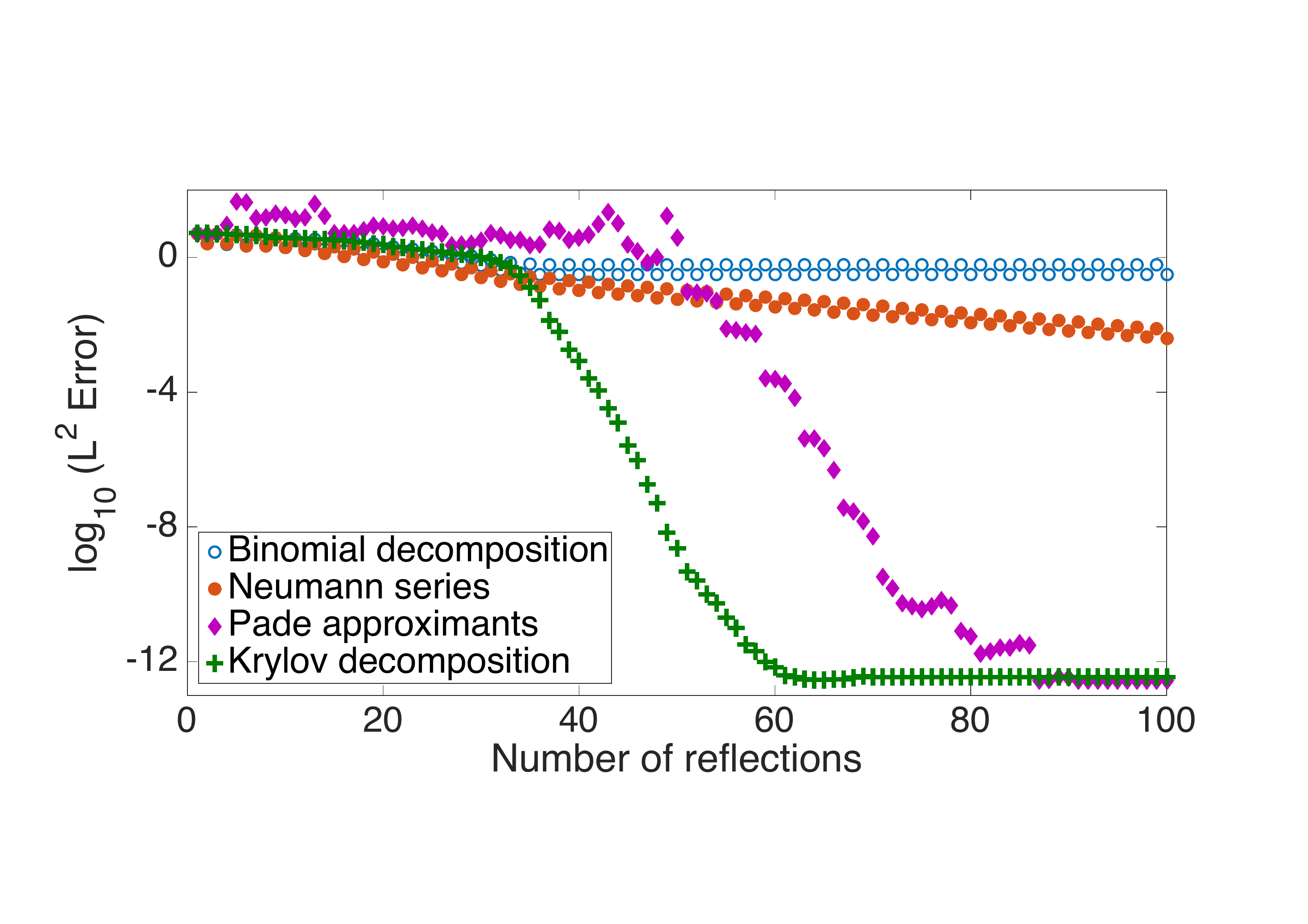}}
}
\end{center}\vspace{-1cm}
\caption{Acceleration provided by the Krylov subspace method.}
\label{fig:Krylov}
\end{figure}

Figure \ref{fig:Krylov} provides a comparison of (a) the Neumann series,
(b) the Pad\'{e} approximants, (c) the Krylov subspace method based on
a combined use of binomial formula \eqref{eq:binom} and identity
\eqref{eq:pjbad}, and (d) the alternative implementation of the latter
based on decomposition \eqref{eq:pj} leading to equation
\eqref{eq:pjgood}. More precisely, Figure \ref{fig:Krylov} depicts the
number of reflections versus the logarithmic $L^2$ error
\[
	\log_{10} \Vert \eta - \hat{\eta} \Vert_{2}
\]
between the exact solution $\eta$ and the approximations $\hat{\eta}$
obtained by the four aforementioned schemes. In both cases, the reference 
solution $\eta$ is computed using an integral solver with sufficiently many
disretization points to guarantee $14$ digits of accuracy. 
As we anticipated,
combined use of binomial formula \eqref{eq:binom} and identity
\eqref{eq:pjbad} suffers from subtractive cancellations and fails to
approximate the solution as the number of reflections increases.
The implementation of Krylov subspace method based on decomposition
\eqref{eq:pj} and resulting equation  \eqref{eq:pjgood} clearly resolves this
issue. Furthermore, when compared with the Pad\'{e} approximants 
considered in \cite{BrunoEtAl05}, approximations provided by this
alternative implementation of the Krylov subspace method are more
stable and give better accuracy at each iteration. Incidentally, note
specifically that a direct use of Neumann series would require about
$77/522$ iterations to obtain $12$ digits of accuracy for circular/elliptical
configurations in Figure~\ref{fig:configurations}, and thus our Krylov subspace
approach provides savings of $78\%/87\%$ in the required number of
reflections.

Finally, in Figure~\ref{fig:Kirchhoff}, we display a comparison of (a) the
Neumann series, (b) the stable implementation of our Krylov subspace
approach based on decomposition \eqref{eq:pj} and equation
\eqref{eq:pjgood}, and (c) Kirchhoff preconditioning of the latter. Note
precisely that (c) is based on the Krylov subspace iterations (described in
\S\ref{sec:Krylov}) applied to the truncated version \eqref{eq:trpreshort} of
preconditioned form \eqref{eq:altpre} of the multiple scattering problem
\eqref{eq:OE} utilizing the Kirchhoff operator $\mcK$. In our implementations
we have taken $N=M$ in equation \eqref{eq:trpreshort} and used $N= 12/40$
for the circular/elliptical configurations in Figure~\ref{fig:configurations}.
As depicted in Figure~\ref{fig:Kirchhoff}, in both cases only three ORTHODIR
iterations are sufficient to obtain $3-$digits of accuracy which would
require 20/100 iterations if Neumann series is directly used. The fact that the
error does not attain the machine precision is due to the truncation of the series
used to compute the preconditioner ($N=12/40$). Obviously inclusion of more
terms yields better accuracy but at the expense of slightly more expansive
numerics.

\begin{figure}[pt]
\begin{center}
\hbox{
	\subfigure[Circles]
		{\includegraphics*[width=3.0in,viewport=40 90 770 490,clip] {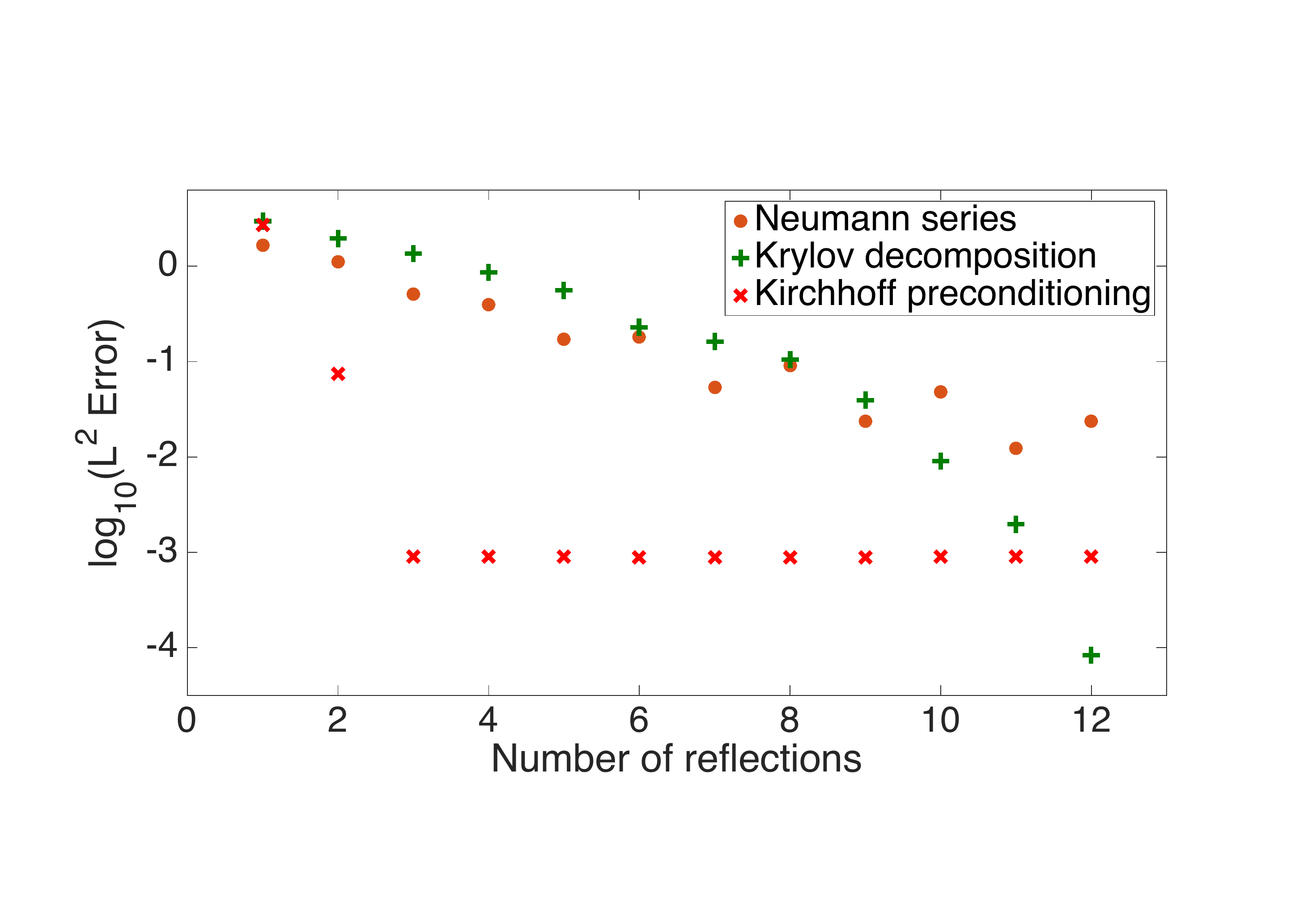}}
	\subfigure[Ellipses]
		{\includegraphics*[width=3.0in,viewport=40 90 770 490,clip] {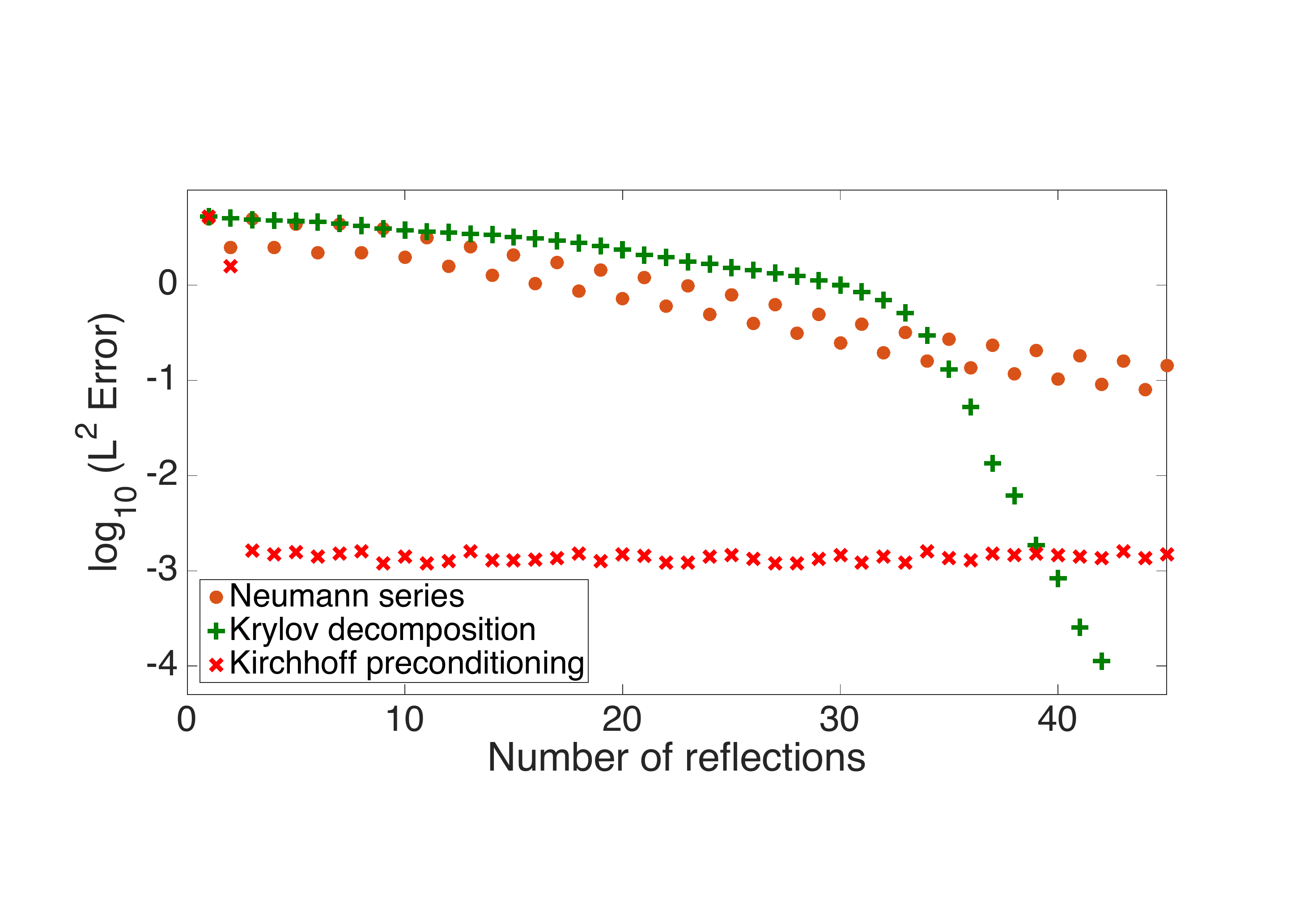}}
}
\end{center}\vspace{-1cm}
\caption{Preconditioning through Kirchhoff approximations.}
\label{fig:Kirchhoff}
\end{figure}

%%%%%%%%%%%%%%%%%%%%%%%%%%%%%%%%%%%%%%%%%%%%%%%%%

\section{Conclusion}
\label{sec:Conclusions}

We have developed an acceleration strategy for the solution of multiple scattering
problems based on a novel and effective use of Krylov subspace methods that retains the
phase information and provides significant savings in computational times. Further,
we have coupled this approach with an original preconditioning strategy based upon Kirchhoff
approximations that  greatly reduces the number of iterations needed to obtain a
prescribed accuracy. In the forthcoming work, we will extend this numerical algorithm
for configurations of more than two obstacles. Indeed, our new Krylov method can be easily
applied to this kind of configurations without adding any additional computational cost.
On the other hand, although the Kirchhoff preconditioner greatly enhances the convergence
of the Krylov subspace method for two obstacles, its utilization for several obstacles requires
some numerical optimization.

\section{Acknowledgments}

Y. Boubendir gratefully acknowledges support from NSF through grant No. DMS-1319720.

\end{document}